\newtheorem{theorem}{Theorem}
\newtheorem{lemma}[theorem]{Lemma}
\newtheorem{prop}[theorem]{Proposition}
\newtheorem*{theorem*}{Theorem}
\newtheorem*{lemma*}{Lemma}
\newtheorem*{prop*}{Proposition}
\newtheorem*{corollary*}{Corollary}
\newtheorem*{remark*}{Remark} 
\newtheorem*{remarks*}{Remarks}
\newtheorem*{conj*}{Conjecture}
\def\S{\mathbb{S}}
\def\R{{\mathbb R}}
\def\Z{{\mathbb Z}}
\def\s{\sigma}
\newcommand{\supp}{\mathrm{supp}}
\DeclareFontFamily{U}{mathx}{\hyphenchar\font45}
\DeclareFontShape{U}{mathx}{m}{n}{
	<5> <6> <7> <8> <9> <10>
	<10.95> <12> <14.4> <17.28> <20.74> <24.88>
	mathx10
}{}
\def\wh{\widehat}
\providecommand{\customgenericname}{}
\newcommand{\newcustomtheorem}[2]{%
	\newenvironment{#1}[1]
	{%
		\renewcommand\customgenericname{#2}%
		\renewcommand\theinnercustomgeneric{##1}%
		\innercustomgeneric
	}
	{\endinnercustomgeneric}
}
\numberwithin{equation}{section}
\begin{document}

 \title[Bilinear singular integral operators with kernels in weighted spaces]{Bilinear singular integral operators with kernels in weighted spaces}
\author[P.~Honz{\'i}k, S.~Lappas and L.~Slav{\'i}kov{\'a}]{Petr~Honz{\'i}k, Stefanos~Lappas and Lenka~Slav{\'i}kov{\'a}}

\subjclass[2020]{42B15, 42B20, 42B25}
\keywords{Bilinear operator, rough singular integral operator, weighted Lebesgue space}

\newcommand{\Addresses}{{
		\bigskip
		\footnotesize

        \textsc{Petr~Honz{\'i}k.}
		\textsc{Department of Mathematical Analysis, Faculty of Mathematics and Physics, Charles University, Sokolovsk\'a 83, 186 75 Praha 8, Czech Republic}\par\nopagebreak
		\textit{E-mail address:} \texttt{honzikpe@karlin.mff.cuni.cz}

		\textsc{Stefanos~Lappas.}
		\textsc{Department of Mathematical Analysis, Faculty of Mathematics and Physics, Charles University, Sokolovsk\'a 83, 186 75 Praha 8, Czech Republic}\par\nopagebreak
		\textit{E-mail address:} \texttt{stefanos.lappas@matfyz.cuni.cz}
		
		\textsc{Lenka~Slav{\'i}kov{\'a}.}
		\textsc{Department of Mathematical Analysis, Faculty of Mathematics and Physics, Charles University, Sokolovsk\'a 83, 186 75 Praha 8, Czech Republic}\par\nopagebreak
		\textit{E-mail address:} \texttt{slavikova@karlin.mff.cuni.cz}

}}

\begin{abstract} 
We establish the full quasi-Banach range of $L^{p_1}(\R) \times L^{p_2}(\R) \rightarrow L^p(\R)$ bounds for one-dimensional bilinear singular integral operators with homogeneous kernels whose restriction $\Omega$ to the unit sphere $\S^1$ is supported away from the degenerate line $\theta_1=\theta_2$, belongs to $L^q(\S^1)$ for some $q>1$ and has vanishing integral. In fact, a more general result is obtained by dropping the support condition on $\Omega$ and requiring that $\Omega\in L^q(\S^1,u^q)$, where $u(\theta_1,\theta_2)=|\theta_1-\theta_2|^{-1}$ for $(\theta_1,\theta_2)\in \S^1$. In addition, we provide counterexamples that show the failure of the $n$-dimensional version of the previous result when $n\geq 2$, as well as the failure of its $m$-linear variant in dimension one when $m\geq 3$. The relationship of these results to (un)boundedness properties of higher-dimensional multilinear Hilbert transforms is also discussed.
\end{abstract}

\maketitle

\section{Introduction}

The directional bilinear Hilbert transforms are operators of the form
\begin{equation}\label{E:hilbert}
T_\theta(f_1,f_2)(x)=\operatorname{p.v.} \int_{\R} f_1(x-t\theta_1) f_2(x-t\theta_2) \frac{dt}{t}, \quad x\in \mathbb R,
\end{equation}
where $\theta=(\theta_1,\theta_2) \in \mathbb R^2$. The question of their uniform boundedness in the parameter $\theta$ was formulated by Calder\'on in the 1960s as a potential way to prove the boundedness of the Calder\'on first commutator and, in turn, of the Cauchy integral on Lipschitz curves. While bounds for the latter two operators are well understood by now~\cite{C65}, \cite{CMM}, and the same is true for the uniform $L^{p_1}(\R) \times L^{p_2}(\R) \rightarrow L^{p}(\R)$ bounds for the operators~\eqref{E:hilbert} when 
\begin{equation}\label{E:range-bht}
1<p_1,p_2<\infty, \quad \frac{2}{3}<p<\infty \quad \text{and} \quad \frac{1}{p}=\frac{1}{p_1}+\frac{1}{p_2},
\end{equation}
see~\cite{UW},
the problem of determining the full quasi-Banach range of boundedness and uniform boundedness of~\eqref{E:hilbert} remains largely open. In this article, we focus on the simpler but closely related question of boundedness of bilinear singular integral operators obtained by averaging the operators~\eqref{E:hilbert} over all $\theta$ from the unit sphere $\S^1$ with respect to a very rough weight $\Omega$. We show that such operators are bounded in the full quasi-Banach range of exponents \begin{equation}\label{E:range-full}
1<p_1,p_2<\infty, \quad \frac{1}{2}<p<\infty \quad \text{and} \quad \frac{1}{p}=\frac{1}{p_1}+\frac{1}{p_2},
\end{equation}
as long as $\Omega$ is supported away from the degenerate direction $\theta_1=\theta_2$.

Assume, for simplicity, that $\Omega$ is odd and integrable over $\mathbb S^1$. Then the averaged operator
\begin{equation}\label{E:rotation}
T_\Omega(f_1,f_2)(x)=\frac{1}{2}\int_{\S^{1}} \Omega(\theta) T_\theta(f_1,f_2)(x)\,d\theta, \quad x\in \mathbb R,
\end{equation}
can be rewritten as 
\begin{equation}\label{eq:T}
T_\Omega(f_1,f_2)(x) = \operatorname{p.v.} \int_{\R^{2}} \frac{\Omega((y_1,y_2)/|(y_1,y_2)|)}{|(y_1,y_2)|^{2}} f_1(x-y_1)f_2(x-y_2) dy, \quad x\in \R,
\end{equation}
see~\cite[pages 162 -- 163]{GT}. We note that the boundedness results for the operator $T_\Omega$ often remain valid when the assumption that $\Omega$ is odd is replaced by the weaker condition that $\Omega$ has vanishing integral over $\S^1$. In this situation, however, $T_\Omega$ can no longer be expressed in the form~\eqref{E:rotation}. 

Before stating our main result in detail, we point out some aspects of the problem which motivate our choice of the functional framework for the function $\Omega$. We recall that the generic case of the directional bilinear Hilbert transform~\eqref{E:hilbert} is the one when 
\begin{equation}\label{E:theta-range}
\theta_1\neq 0, \quad \theta_2 \neq 0 \quad \text{and}  \quad \theta_1\neq \theta_2.
\end{equation} 
If all these conditions are fulfilled, then~\eqref{E:hilbert} is a genuinely bilinear operator, whose $L^{p_1}(\R) \times L^{p_2}(\R) \rightarrow L^{p}(\R)$-boundedness in the range of exponents~\eqref{E:range-bht} was established by Lacey and Thiele~\cite{LT97}, \cite{LT99} via time-frequency analysis techniques, and the corresponding uniform bounds were subsequently proved in~\cite{Thi02}, \cite{GL}, \cite{Li}, \cite{UW}. On the other hand, if one of the conditions~\eqref{E:theta-range} fails, then the operator~\eqref{E:hilbert} becomes degenerate and its boundedness properties can be deduced from the boundedness properties of the standard (linear) Hilbert transform. In particular, when $\theta_1=\theta_2$ then the operator $T_\theta$ reduces to the Hilbert transform applied to the pointwise product of the input functions. Since the Hilbert transform is unbounded on $L^p(\R)$ when $p\leq 1$, the operator $T_\theta$ cannot be $L^{p_1}(\R) \times L^{p_2}(\R) \rightarrow L^{p}(\R)$-bounded for $p\leq 1$ in this degenerate situation. To summarize, we have observed that the boundedness properties of the operator $T_\theta$ depend on the particular choice of $\theta$. 

When $\Omega \in L^\infty(\S^1)$, then the $L^{p_1}(\R) \times L^{p_2}(\R) \rightarrow L^{p}(\R)$-boundedness of the operator $T_\Omega$ in the full quasi-Banach range of exponents~\eqref{E:range-full} was established in~\cite{GHH18}. Analogous results in the case when $\Omega$ possesses some amount of smoothness were available before, see~\cite{CM75}, \cite{GT}, \cite{KS99}. The authors of~\cite{GHH18} employed a new technique based on decomposing the rough kernel of the operator $T_\Omega$ into a series of smooth kernels, and then decomposing each smooth kernel on the frequency side in terms of product-type compactly supported smooth wavelets. While the condition $\Omega \in L^\infty(\S^1)$ is weak enough to make the averaging procedure in~\eqref{E:rotation} rough, it still has enough smoothening effect to ensure 
that the unboundedness of the operator $T_{(\theta_1,\theta_1)}$ outside of the Banach range of exponents does not cause the unboundedness of the averaged operator. In the present article, we focus on the case when $\Omega \in L^q(\S^1)$ for $q>1$. In this setting, the effects of the degenerate operators $T_{(\theta_1,\theta_1)}$ on the boundedness of the averaged operator $T_\Omega$ can be clearly seen and require us to modify the functional framework for the function $\Omega$ in a way which has not been considered in earlier articles on this topic. Our first main result has the following form. 

\begin{theorem}\label{T:main-result-restricted}
Let $1<p_1,p_2<\infty$, $1/2<p<\infty$ satisfy $1/p=1/p_1+1/p_2$, and let $q>1$ and $\alpha>0$. Assume that $\Omega \in L^q(\mathbb S^1)$ fulfills $\int_{\S^{1}} \Omega(\theta) d\sigma(\theta)=0$ and is supported in the set of those $(\theta_1,\theta_2)\in \S^1$ for which $|\theta_1-\theta_2|\geq \alpha$. Then there exists a constant $C=C(p_1,p_2,q,\alpha)$ such that
\begin{equation}\label{E:boundedness-restricted-support}
\big\Vert T_{\Omega}(f_1,f_2)\big\Vert_{L^{p}(\R)}\leq C\Vert\Omega\Vert_{L^q(\S^{1})}\Vert f_1\Vert_{L^{p_1}(\R)}\Vert f_2\Vert_{L^{p_2}(\R)}.
\end{equation}
\end{theorem}

Theorem~\ref{T:main-result-restricted} fails in the absence of the support condition on $\Omega$. This follows from the results of~\cite{DS23} (see also~\cite{DGHST11}, \cite{GHH18}, \cite{GHS20}, \cite{HP21} for earlier partial results), which imply that in such a situation, inequality~\eqref{E:boundedness-restricted-support} holds whenever $1/p+1/q<2$ but fails otherwise. In fact, we also prove a more general variant of Theorem~\ref{T:main-result-restricted} in which the support condition on $\Omega$ is replaced by the weaker requirement that $\Omega \in L^q(\S^1,u^q)$ for a suitable weight $u$ which behaves like a constant away from the degenerate points $\pm (1/\sqrt{2},1/\sqrt{2})$ and blows up at these points. The corresponding result is stated under the assumption that $1/p+1/q\geq 2$. This causes no loss of generality as the case $1/p+1/q<2$ was treated in~\cite{DS23}. In particular, since $q>1$, we always have $p<1$ in the result below.

\begin{theorem}\label{thm:main result}
Let $1<p_1,p_2<\infty$, $1/2<p<\infty$ satisfy $1/p=1/p_1+1/p_2$, and let $q>1$ be such that $1/p+1/q\geq 2$. 

\noindent
\textup{(i)} Given $\varepsilon>0$, we set 
\[
u(\theta_1,\theta_2)=|\theta_1-\theta_2|^{2-\frac{1}{p}-\frac{1}{q}-\varepsilon} \quad \text{for} \quad (\theta_1,\theta_2)\in \S^1.
\]
Suppose that $\Omega \in L^q(\S^{1},u^q)$ and $\int_{\S^{1}} \Omega(\theta) d\sigma(\theta)=0$. Then there exists a constant $C=C(p_1,p_2,q,\varepsilon)$ such that

\begin{equation*}
\big\Vert T_{\Omega}(f_1,f_2)\big\Vert_{L^{p}(\R)}\leq C\Vert\Omega\Vert_{L^q(\S^{1},u^q)}\Vert f_1\Vert_{L^{p_1}(\R)}\Vert f_2\Vert_{L^{p_2}(\R)}.
\end{equation*}

\noindent
\textup{(ii)}
Let \[
u(\theta_1,\theta_2)=|\theta_1-\theta_2|^{2-\frac{1}{p}-\frac{1}{q}} \quad \text{for} \quad (\theta_1,\theta_2)\in \S^1.
\]
Then 
there exist an odd function $\Omega \in L^q(\S^{1},u^q)$ and $f_i\in L^{p_i}(\R)$, $i=1,2$, such that $T_{\Omega}(f_1,f_2)\notin L^{p}(\R)$.
\end{theorem}

Theorem~\ref{T:main-result-restricted} is a direct consequence of Theorem~\ref{thm:main result}, and it thus suffices to prove the latter. Our proof builds on the following geometric observation. Suppose that $(\theta_1,\theta_2)$ is a point from the unit sphere $\S^1$ and that $f_1$ and $f_2$ are formally chosen to be the Dirac delta measures at points $\theta_1$ and $\theta_2$, respectively. Then 
\begin{equation*}
T_\Omega(f_1,f_2)(x)=\frac{\Omega \left(\frac{(x-\theta_1,x-\theta_2)}{|(x-\theta_1,x-\theta_2)|}\right)}{|(x-\theta_1,x-\theta_2)|^{2}}, \quad x\in \R.
\end{equation*}
As long as $\theta_1$ is far away from $\theta_2$, the mapping
\begin{equation}\label{E:mapping}
x\mapsto \frac{(x-\theta_1,x-\theta_2)}{|(x-\theta_1,x-\theta_2)|},
\end{equation}
which amounts to shifting the point $(-\theta_1,-\theta_2)\in \S^1$ by the vector $(x,x)$ and projecting it back to the unit sphere, maps a small interval centered at the origin onto an arc on the unit sphere of comparable length. However, as $\theta_1-\theta_2$ approaches $0$, the arc becomes smaller until it finally degenerates to a point once $\theta_1=\theta_2$, see Figure~\ref{figure} for an illustration of this process. 
Such degeneracies in turn prevent bounding the norm of $T_\Omega(f_1,f_2)$ from above by the $L^q$-norm of $\Omega$ when $q<\infty$.

\begin{center}
\begin{figure}[htb]
\begin{tikzpicture}

\draw (0,0) circle (1);

\draw[red,thick,->] (0,1) -- (1/2,3/2);
\draw[red,thick,->] (0.4,0.9165) -- (0.9,1.4165);
\draw[red,thick,->] (-0.7071,0.7071) -- (-0.2071,1.2071);
\draw[red,thick,->] (0.7071,0.7071) -- (1.2071,1.2071);

\draw[dashed] (0,0) -- (1/2,3/2);
\draw[dashed] (0,0) -- (0.9,1.4165);
\draw[dashed] (0,0) -- (-0.2071,1.2071);
\draw[dashed] (0,0) -- (0.7071,0.7071);

\draw[violet, thick,->] (0,1) arc[start angle=90, end angle=atan(3/sqrt(10)/(1/sqrt(10))), radius=1];
\draw[violet, thick,->] (0.4,0.9165) arc[start angle=atan(0.9165/0.4), end angle=atan(1.4165/0.9), radius=1];
\draw[violet, thick,->] (-0.7071,0.7071) arc[start angle=135, end angle=atan(-1.2071/0.2071)+180, radius=1];
\end{tikzpicture}
\caption{Behavior of~\eqref{E:mapping} for different values of $(\theta_1,\theta_2)$.}
 \label{figure}
\end{figure}
\end{center}

In order to turn the above-mentioned geometric observation into a formal proof of Theorem~\ref{thm:main result}, we perform various decompositions of the kernel of the operator $T_\Omega$ and of the input functions $f_1$ and $f_2$. Specifically, we first express the rough kernel as an infinite sum of smooth kernels, following the approach of Duoandiko\-etxea and Rubio de Francia~\cite{DuRu1986}, which was pioneered in the bilinear setting in~\cite{GHH18}. To prove bounds for each of the operators with smooth kernels, we perform a Calder\'on-Zygmund decomposition of the functions $f_1$ and $f_2$. The ``good'' part of the operator is then estimated using the results of~\cite{DS23} while estimates for the ``bad'' part of the operator employ the above-mentioned geometric argument. 
Since the Calder\'on-Zygmund decomposition does not yield sufficiently strong quantitative estimates of the norms of the operators with smooth kernels, the results are then interpolated with those from~\cite{DS23} to obtain the desired estimates. We combine standard multilinear interpolation theorems~\cite{Ja1988} with the recent results of Cao, Olivo and Yabuta~\cite{COY22}, involving multilinear interpolation in the setting of weighted Lebesgue spaces. 

We will next comment on some of the difficulties which arise when trying to extend the results of Theorems~\ref{T:main-result-restricted} and~\ref{thm:main result} to the higher-dimensional and/or multilinear setting. The directional $m$-linear Hilbert transforms in $n$ dimensions are operators of the form
\begin{equation*}
T_\theta(f_1,\dots,f_m)(x)=\operatorname{p.v.} \int_{\R} f_1(x-t\theta_1) \cdots f_m(x-t\theta_m) \frac{dt}{t}, \quad x\in \mathbb R^n,
\end{equation*}
where $\theta=(\theta_1,\dots,\theta_m) \in \mathbb R^{mn}$ and $f_1,\dots,f_m$ are $n$-dimensional input functions. No bounds for these operators are known when $m\geq 3$ or $n\geq 2$. In fact, counterexamples exist, showing that the generic case of these operators becomes unbounded in a part of the quasi-Banach range of exponents; see~\cite{Demeter} for the case $m=3$ and $n=1$, and~\cite{KTZ} for $m=n=2$. Despite of this fact, the averaged operator 
\begin{equation}\label{E:rotation-multilinear}
T_\Omega(f_1,\dots,f_m)(x)=\frac{1}{2}\int_{\S^{mn-1}} \Omega(\theta) T_\theta(f_1,\dots,f_m)(x)\,d\theta, \quad x\in \mathbb R^n,
\end{equation}
is $L^{p_1}(\R^n) \times \dots \times L^{p_m}(\R^n) \rightarrow L^p(\R^n)$-bounded in the full quasi-Banach range of exponents
\begin{equation}\label{E:quasi-banach-multilinear}
1<p_1,\dots,p_m<\infty, \quad \frac{1}{m}<p<\infty \quad \text{and} \quad \frac{1}{p}=\frac{1}{p_1}+\dots+\frac{1}{p_m},
\end{equation}
as long as $\Omega$ is an odd function on the unit sphere $\mathbb S^{mn-1}$ which is sufficiently regular. When $\Omega$ is Lipschitz, such a result follows from the multilinear Calder\'on-Zygmund theory~\cite{GT}, while the analogous result for $\Omega \in L^\infty(\S^{mn-1})$ was established in~\cite{GHHP22}. We point out that, as previously, the operator~\eqref{E:rotation-multilinear} can be rewritten as
\begin{equation}\label{eq:T-multilinear}
T_\Omega(f_1,\dots,f_m)(x) = \operatorname{p.v.} \int_{\R^{mn}} \frac{\Omega((y_1,\dots,y_m)/|(y_1,\dots,y_m)|)}{|(y_1,\dots,y_m)|^{mn}} f_1(x-y_1) \cdots f_m(x-y_m) dy
\end{equation}
for $x\in \R^n$, see~\cite[pages 162 -- 163]{GT}.

The operator $T_\Omega$ is no longer bounded in the full quasi-Banach range of exponents~\eqref{E:quasi-banach-multilinear} when $\Omega$ merely belongs to  $L^q(\S^{mn-1})$ for $q<\infty$. When no support conditions on the function $\Omega$ are imposed, such counterexamples follow from~\cite{GHHP22}. We show that if $m\geq 3$ or $n\geq 2$, counterexamples can be constructed even under the nondegeneracy assumptions as in Theorem~\ref{T:main-result-restricted}. This provides a further indication that, in contrast to the case $q=\infty$, the (un)boundedness properties of the averaged operator $T_\Omega$ with $\Omega \in L^q(\S^{mn-1})$ for $q$ close to $1$ reflect well the (un)boundedness properties of the operators $T_\theta$.

\begin{theorem}\label{T:counterexample}
\textup{(i)}
Let $n\geq 2$, $1<p_1,p_2<\infty$, $1/p=1/p_1+1/p_2$ and let $q\in (1,\infty)$ be such that
\begin{equation}\label{prangebili}
p< \frac{nq}{1-n+2nq}.
\end{equation}
Then there exists an odd function $\Omega\in L^q(\mathbb S^{2n-1})$ supported in the set 
\begin{equation*}
\{(\theta_1,\theta_2)\in \mathbb S^{2n-1}:~ |\theta_1-\theta_2|>1/2\}
\end{equation*}
such that the operator $T_\Omega$ is not bounded as bilinear operator from $L^{p_1}(\mathbb R^n)\times L^{p_2}(\mathbb R^n)$ to $L^p(\mathbb R^n).$

\textup{(ii)}
Let $m\geq 3,$ $1<p_1,\dots,p_m<\infty$, $1/p=1/p_1+\cdots+1/p_m$ and let $q\in (1,\infty)$ be such that 
\begin{equation}\label{prangemultili}
p<\frac{q}{(q-1)m +2}.
\end{equation}
Then there exists an odd function $\Omega\in L^q(\mathbb S^{m-1})$ supported in the set 
\begin{equation*}
\{(\theta_1,\theta_2,\dots,\theta_m)\in \mathbb S^{m-1}:~ |\theta_i-\theta_j|>m^{-3/2} \text{ for } i\neq j\}
\end{equation*}
such that the operator $T_\Omega$ is not bounded as $m$-linear operator from $L^{p_1}(\mathbb R)\times\cdots\times L^{p_m}(\mathbb R)$ to $L^p(\mathbb R).$
\end{theorem}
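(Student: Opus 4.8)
The plan is to establish Theorem~\ref{T:counterexample} by an explicit construction. In both parts, I would first produce, for each small parameter $\delta>0$, a function $\Omega_\delta$ supported away from the degenerate configurations together with test functions witnessing a quantitative failure of boundedness whose size grows without bound as $\delta\to 0$, and then assemble these into a single offending $\Omega$: taking $\Omega=\sum_j \lambda_j\,\Omega_{\delta_j}$ for a suitable null sequence $\delta_j\to 0$ and rapidly decaying coefficients $\lambda_j$, with the $\Omega_{\delta_j}$ having pairwise disjoint supports around distinct base directions, guarantees $\Omega\in L^q$ while $T_\Omega$ inherits the unboundedness from each summand when tested against functions adapted to that summand (the remaining summands contributing a negligible error).

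For part (i) I would fix a non-degenerate direction $\theta^0=(\theta_1^0,\theta_2^0)\in\S^{2n-1}$ with $|\theta_1^0-\theta_2^0|>1/2$ and, crucially, with $\theta_1^0$ and $\theta_2^0$ linearly independent in $\R^n$ — a choice available precisely because $n\geq 2$. I would let $\Omega_\delta$ be an odd, $L^q$-normalized multiple of the indicator of a small (possibly anisotropic) neighborhood of $\theta^0$ on $\S^{2n-1}$, so that $\|\Omega_\delta\|_{L^q(\S^{2n-1})}\sim 1$, the support condition $|\theta_1-\theta_2|>1/2$ holds, and, via the method of rotations \eqref{E:rotation}, $T_{\Omega_\delta}$ equals a constant $c_\delta$ (a negative power of $\delta$ forced by the normalization) times an average of directional bilinear Hilbert transforms $T_\theta$ over that neighborhood. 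The core step is to feed this averaged operator test functions $f_1^\delta,f_2^\delta$ built in the spirit of the construction underlying the unboundedness of a single $T_{\theta^0}$ for $p\leq 1$ in dimension $n\geq 2$ (\cite[Appendix A.3]{KTZ}); the essential new feature is that, since $\Omega_\delta$ is a genuine $L^q$ function and not a point mass, its kernel is smeared over a $\delta$-neighborhood of directions, so the test functions can only be tuned at resolution $\sim\delta$ rather than at arbitrarily fine scales, which is what caps the resulting blow-up.

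Bookkeeping the scales appearing in $f_1^\delta,f_2^\delta$ against the size $\delta$ of the neighborhood, the constant $c_\delta$, and the exponents, one should find that the ratio $\|T_{\Omega_\delta}(f_1^\delta,f_2^\delta)\|_{L^p(\R^n)}\big/\bigl(\|\Omega_\delta\|_{L^q(\S^{2n-1})}\|f_1^\delta\|_{L^{p_1}(\R^n)}\|f_2^\delta\|_{L^{p_2}(\R^n)}\bigr)$ is comparable to a power $\delta^{-E}$ with $E=E(p_1,p_2,q,n)$, and that $E>0$ holds exactly when $p<\frac{nq}{1-n+2nq}$; letting $\delta\to 0$ then yields the claim. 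Part (ii) follows the same scheme in the $m$-linear, one-dimensional setting, with $\theta^0\in\S^{m-1}$ a non-degenerate direction (all $\theta_i^0$ distinct and nonzero), $\Omega_\delta$ concentrated near $\theta^0$, and $m$-linear Hilbert transforms replacing the bilinear ones; the analogous computation produces a blow-up precisely when $p<\frac{q}{(q-1)m+2}$, consistently with the known unboundedness phenomena for $m$-linear Hilbert transforms when $m\geq 3$ (cf.\ \cite{Demeter}).

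I expect the main obstacle to be the construction and analysis of the test functions: one has to arrange that the contributions of the many dyadic scales reinforce one another after integrating $T_\theta$ over the $\delta$-neighborhood of $\theta^0$ — rather than cancelling — and one has to check that the support constraints $|\theta_1-\theta_2|>1/2$ in (i) and $|\theta_i-\theta_j|>m^{-3/2}$ in (ii) are compatible with the chosen base directions and the size of the neighborhood. This is exactly where $n\geq 2$ (respectively $m\geq 3$) enters: in dimension one the operators $T_\theta$ are uniformly bounded away from the degenerate directions in the range $p>2/3$, and the bilinear case is governed by these bounds, so no construction of this type can exist there; that is why Theorem~\ref{T:main-result-restricted} holds and why the counterexamples are genuinely a higher-dimensional, respectively higher-multilinearity, phenomenon.
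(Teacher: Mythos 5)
Your outer framework matches the paper's: build a family $\Omega_\delta$ with $\|\Omega_\delta\|_{L^q}\sim 1$, witness a blow-up rate $\delta^{-E}$ with $E>0$ in the stated range of exponents, and then superpose with decaying coefficients to get a single unbounded operator. But the inner mechanism you propose is genuinely different from the paper's, and there is a concrete gap in it.

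The paper's proof is entirely elementary and does not pass through directional bilinear Hilbert transforms, the method of rotations, or any KTZ-style oscillatory construction. It takes $f_1=\chi_{B(-e_1,\delta)}$, $f_2=\chi_{B(0,\delta)}$, lets $A=\{(x+e_1,x)/|(x+e_1,x)| : x\in B(0,1/100)\}$ be an $n$-dimensional submanifold of $\S^{2n-1}$, and sets $\Omega_\delta$ proportional to $\chi_B-\chi_{-B}$ where $B$ is the $10\delta$-thickening of $A$. Because, for each $x\in B(0,1/100)$ and $(y_1,y_2)$ in the supports of the test functions, the direction $(x-y_1,x-y_2)/|(x-y_1,x-y_2)|$ lands in $B$ and hence where $\Omega_\delta$ is \emph{positive}, one gets $T_{\Omega_\delta}(f_1,f_2)(x)\approx K\delta^{2n}$ on a set of fixed measure. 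No cancellation analysis, no careful tuning of many scales, no appeal to known unboundedness of $T_\theta$: positivity does everything. This is simpler than what you propose, and more robust.

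The gap in your proposal is geometric. You describe $\Omega_\delta$ as (an anisotropic version of) the indicator of a small neighborhood of a single point $\theta^0\in\S^{2n-1}$. With an isotropic $\delta$-ball the measure of the support is $\sim\delta^{2n-1}$, which gives $K\approx\delta^{(1-2n)/q}$ rather than $\delta^{(1-n)/q}$, and that wrecks the exponent count — your $E$ comes out wrong. What is actually needed is a set that is \emph{long} (size $\sim 1$) along the full $n$-parameter family of directions $(x+e_1,x)/|(x+e_1,x)|$ swept out by the output variable $x$, and thin (size $\sim\delta$) only in the remaining $n-1$ directions, so that $|B|\sim\delta^{n-1}$. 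This is exactly why $n\geq 2$ matters: for $n=1$ this tube fills up $\S^1$, $|B|\sim 1$, $K\sim 1$, and the resulting exponent $2-1/p$ is positive for all $p>1/2$, so the construction produces no blow-up, consistent with Theorem~\ref{T:main-result-restricted}. Your phrase ``possibly anisotropic'' is not enough; without the tube-around-a-submanifold structure the argument does not close. Relatedly, the base direction the paper uses is $\theta^0=(e_1,0)$, so $\theta_1^0$ and $\theta_2^0$ are \emph{not} linearly independent there; the linear-independence heuristic you attach to $n\geq 2$ is not the mechanism that makes the estimate work.

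Finally, your proposed route through \eqref{E:rotation} and a Kova\v{c}--Thiele--Zorin-Kranich style construction of $f_1^\delta,f_2^\delta$ would have to solve the precise reinforcement-vs.-cancellation bookkeeping you flag as the main obstacle; the paper shows this obstacle can be avoided altogether. Also note that your closing paragraph overstates what the method-of-rotations viewpoint gives in dimension one: uniform BHT bounds only cover $p>2/3$, while Theorem~\ref{T:main-result-restricted} reaches all $p>1/2$ by a method that is not the method of rotations, so ``the bilinear case is governed by these bounds'' is not the right explanation for why no one-dimensional bilinear counterexample exists in that range.
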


We note that in the bilinear setting, the set of those $q\in (1,\infty)$ satisfying condition~\eqref{prangebili} is nonempty whenever $\frac{1}{2}<p< \frac{n}{n+1}$,  while in the multilinear case in dimension one, our counterexamples imply unboundedness in a nontrivial range of $q'$s whenever $\frac{1}{m}<p<\frac 1 2$.

Our proof of Theorem~\ref{T:counterexample} makes use of geometric constructions which are higher-dimensional variants of those depicted in Figure~\ref{figure}. In particular, the crucial observation is that for any $(\theta_1,\dots,\theta_m)\in \S^{mn-1}$, the mapping
\begin{equation*}
x\mapsto \frac{(x-\theta_1,\dots,x-\theta_m)}{|(x-\theta_1,\dots,x-\theta_m)|}
\end{equation*}
maps a small ball in $\R^n$ centered at the origin onto a subset of the unit sphere of $(mn-1)$-dimensional measure zero. 
It seems likely that in order to prove positive results in the spirit of Theorems~\ref{T:main-result-restricted} and~\ref{thm:main result} in the case when $m\geq 3$ or $n\geq 2$, one would have to modify the averaging procedure which led to the definition of the operator $T_\Omega$ in~\eqref{E:rotation-multilinear}. Namely, rather than averaging operators with one-dimensional kernels over an $(mn-1)$-dimensional set of directions, one would have to average operators with higher-dimensional (and sufficiently regular) kernels over a lower-dimensional set of directions. This hypothesis is supported by the fact that the known quasi-Banach estimates for the bilinear Hilbert transform can be understood within a more general context of quasi-Banach estimates for certain multilinear operators with higher-dimensional kernels~\cite{MTT}.
A more thorough discussion of this situation is however outside of the scope of the present paper.

\medskip
The paper is structured as follows. In Section~\ref{S:overview}, we provide an overview of the proof of Theorem~\ref{thm:main result}, while the specific steps of the proof are carried out in Sections~\ref{S:CZ decomposition}--\ref{S:counterexamle n=1}. Theorem~\ref{T:counterexample} is proved in Section~\ref{S:counterexamle-higher-n}.

\subsection*{Notation}
Throughout the paper, the symbol $f\lesssim g$ means that $f\leq Cg$ for some constant $C>0$ that may vary from line to line and depends on the main parameters, which will often be specified explicitly. We write $f\approx g$ if $f\lesssim g$ and $g\lesssim f$ hold simultaneously. We denote by $\chi_A$ the characteristic function of a set $A$.  For simplicity reasons, we adopt the notation $y:=(y_1,y_2)\in \R^{2}$ and $\xi:=(\xi_1,\xi_2)\in \R^{2}.$

\subsection*{Acknowledgements}
The first author was supported by GACR P201/24-10505S. The second and third author were supported by the Primus research programme\\ PRIMUS/21/SCI/002 of Charles University. Furthermore, the second author gratefully acknowledges support from the Foundation for Education and European Culture, founded by Nicos and Lydia Tricha. The third author was also supported by Charles University Research Centre program No.\ UNCE/24/SCI/005.

\section{Proof of Theorem~\ref{thm:main result}: Overview}\label{S:overview}

Following the approach from~\cite{GHH18}, our proof of Theorem~\ref{thm:main result} utilizes a dyadic decomposition of Duoandikoetxea and Rubio de Francia \cite{DuRu1986}, reducing the study of an operator with a rough kernel to a series of operators with smooth kernels. To be more specific, we choose a Schwartz function $\Phi$ on $\R^2$ whose Fourier transform $\wh{\Phi}$ is supported in the annulus $\{\xi\in \R^2: 1/2\le |\xi|\le 2\}$ and satisfies the property $\sum_{j\in\Z}\wh{\Phi_j}(\xi)=1$ for $\xi \neq 0$, where $\Phi_j:=2^{2j} \Phi(2^j \cdot)$. Let 
\begin{equation*}
K(y_1,y_2)=\frac{\Omega((y_1,y_2)/|(y_1,y_2)|)}{|(y_1,y_2)|^{2}}, \quad (y_1,y_2)\in\R^{2}\setminus\{(0,0)\}
\end{equation*} 
be the kernel of the operator $T_\Omega$ defined in~\eqref{eq:T}.
We set
$$ K^{i}(y):=\wh{\Phi}(2^{i}y)K(y), \qquad i\in \Z$$
and
\begin{equation}\label{E:kij}
K_{j}^{i}(y):=\Phi_{i+j}\ast K^{i}(y), \qquad i\in \Z, ~j\in \Z.
\end{equation}
We then decompose the kernel $K$ as $K=\sum_{j\in \Z} K_j$, where 
$$K_{j}(y):=\sum_{i=-\infty}^{\infty}{K_{j}^{i}(y)},\qquad j\in\Z.$$
We denote by $T_j$ and $T^i_j$ the bilinear singular integral operators defined as in~\eqref{eq:T}, save for the fact that the kernel $K$ is replaced by $K_j$ or $K^i_j$, respectively. 

Having at our disposal the aforementioned decomposition of $T_{\Omega}$ into the series of operators $T_j$, we proceed by proving weak-type and strong-type bounds for the operators $T_j$ in the case when $j\ge j_0$ for a suitable positive integer $j_0$. We do not provide new estimates for $T_j$ in the case when $j<j_0$ since bounds for these operators with a geometric decay in $j$ were obtained in~\cite[Proposition 3]{GHH18} by making use of the multilinear Calder\'on-Zygmund theory~\cite{CM75,GT}.

The following proposition follows from~\cite[Claim 7]{DS23} via the embedding $L^p(\mathbb R) \hookrightarrow L^{p,\infty}(\mathbb R)$. In what follows, $j_0$ stands for a positive integer chosen as in~\cite[Claim 7]{DS23}.

\begin{customprop}{A}[\cite{DS23}, Claim 7]\label{pr:1}
Let $j\geq j_0$ and $1<p,p_1,p_2<\infty$ satisfy $1/p=1/p_1+1/p_2$.  Suppose that $q>1$ and $\Omega \in L^q(\S^{1})$ fulfills $\int_{\S^{1}} \Omega(\theta) d\sigma(\theta)=0$. Then there exists a constant $\delta=\delta(p_1,p_2,q)>0$  such that the estimate
\begin{equation*}
\big\Vert T_{j}(f_1,f_2)\big\Vert_{L^{p,\infty}(\R)}\lesssim 2^{-j\delta}\Vert\Omega\Vert_{L^q(\mathbb{S}^{1})}\Vert f_1\Vert_{L^{p_1}(\R)}\Vert f_2\Vert_{L^{p_2}(\R)}    
\end{equation*}
holds, up to a multiplicative constant depending on $p_1, p_2$ and $q$. 
\end{customprop}

Proposition~\ref{pr:1} is only valid for Banach tuples of exponents $(p_1,p_2,p)$. The key new ingredient needed for establishing the quasi-Banach estimates of Theorem~\ref{thm:main result} is the following $L^1(\R) \times L^1(\R) \rightarrow L^{\frac{1}{2},\infty}(\R)$ bound for the operators $T_j$, with an arbitrarily slow blowup in $j$.

\begin{prop}\label{pr:2} 
Let $j\geq j_0$, $\delta>0$, and suppose that $\Omega \in L^1(\S^{1},w)$, where $w(\theta_1,\theta_2)=\frac{1}{|\theta_1-\theta_2|}$ for $(\theta_1,\theta_2)\in \S^1$. Assume that $\int_{\S^{1}} \Omega(\theta) d\sigma(\theta)=0$. 
Then the estimate
\begin{equation*}
\big\Vert T_{j}(f_1,f_2)\big\Vert_{L^{\frac{1}{2},\infty}(\R)}\lesssim 2^{j\delta}\Vert\Omega\Vert_{L^1(\mathbb{S}^{1},w)}\Vert f_1\Vert_{L^{1}(\R)}\Vert f_2\Vert_{L^{1}(\R)}
\end{equation*}
holds, up to a multiplicative constant depending on $\delta$. 
\end{prop}

Proposition~\ref{pr:2} is proved in Section \ref{S:CZ decomposition} via geometric arguments.
In Section \ref{S:interpolation}, we then interpolate between the estimates of Proposition~\ref{pr:1} and Proposition~\ref{pr:2} by making use of the bilinear version of the Marcinkiewicz interpolation theorem (see \cite[Theorem 1.1]{GLLZ12} or \cite[Theorem 3]{Ja1988}). This yields a new strong $L^{p_1}(\R) \times L^{p_2}(\R) \rightarrow L^p(\R)$ estimate for the operator $T_j$ in the full quasi-Banach range of exponents~\eqref{E:range-full}. This estimate is formulated in Proposition \ref{pr:3} below. Thanks to the aforementioned result \cite[Proposition 3]{GHH18} dealing with operators $T_j$ with negative values of $j$, the statement of Proposition~\ref{pr:3} can be transferred into an analogous statement for the operator $T_\Omega$, as expressed in Proposition \ref{pr:4}. The proof of the first part of Theorem~\ref{thm:main result} is then concluded by performing a yet another interpolation between Proposition \ref{pr:4} and \cite[Theorem 1]{DS23}. For this final interpolation argument, we view the operator $T_\Omega(f_1,f_2)$ as a trilinear operator in $f_1$, $f_2$ and $\Omega$ and we employ the following weighted multilinear interpolation result of Cao, Olivo and Yabuta~\cite{COY22}.

\begin{customprop}{B}[\cite{COY22}, Theorem 3.1]\label{thm:COY}
Suppose that $(\Sigma_0, \mu_0), \ldots, (\Sigma_m, \mu_m)$ are
measure spaces, and $\mathscr{S}_j$ is the collection of all simple
functions
on $\Sigma_j$, $j=1,\dots,m$. Denote by $\mathfrak{M}(\Sigma_0)$ the set
of
all measurable functions on $\Sigma_0$.
Let $T: \mathscr{S}=\mathscr{S}_1 \times \cdots \times \mathscr{S}_m \to
\mathfrak{M}(\Sigma_0)$ be an $m$-linear operator.
Let $0<p_0, q_0< \infty$, $1\le p_j, q_j\le \infty$ $(j=1,\dots,m)$, and
let
$w_j, v_j$ be weights on $\Sigma_j$ $(j=0,\dots,m)$.
Assume that there exist $M_1, M_2 \in (0, \infty)$ such that
\begin{align}\label{E:1}
&\|T\|_{L^{p_1}(\Sigma_1,\, w_1^{p_1}) \times \cdots
\times L^{p_m}(\Sigma_m,\, w_m^{p_m}) \to L^{p_0}(\Sigma_0,\, w_0^{p_0})}
\le M_1,
\\
&\|T\|_{L^{q_1}(\Sigma_1,\, v_1^{q_1}) \times \cdots \times
L^{q_m}(\Sigma_m,\, v_m^{q_m}) \to L^{q_0}(\Sigma_0,\, v_0^{q_0})} \le
M_2. \label{E:2}
\end{align}
Then, we have
\begin{equation*}
\|T\|_{L^{r_1}(\Sigma_1,\, u_1^{r_1}) \times \cdots \times
L^{r_m}(\Sigma_m,\, u_m^{r_m}) \to L^{r_0}(\Sigma_0,\, u_0^{r_0})}
\le M_1^{1-\theta} M_2^{\theta},
\end{equation*}
for all exponents satisfying
\begin{equation*}
0<\theta<1,\quad \frac{1}{r_j}=\frac{1-\theta}{p_j}+\frac{\theta}{q_j}
\quad\text{and}\quad u_j=w_j^{1-\theta} v_j^{\theta},\quad j=0,\dots,m.
\end{equation*}
\end{customprop}

We are not aware of any weighted multilinear interpolation results in which the strong-type estimates~\eqref{E:1} and~\eqref{E:2} are replaced by their weak-type counterparts. For this reason, we have to utilize the two-step procedure described above, whose first step aims at passing from weak-type to strong-type bounds. 

Section \ref{S:counterexamle n=1} is dedicated to the proof of the second part of Theorem \ref{thm:main result}. We employ counterexamples similar to those in~\cite[proof of Proposition 3]{DS23}, which were in turn built upon earlier examples from~\cite{DGHST11} and~\cite{GHHP22}.

\section{Proof of Theorem~\ref{thm:main result}: Calder\'on-Zygmund decomposition}\label{S:CZ decomposition}

In this section, we prove Proposition~\ref{pr:2}. Our proof uses the Calder\'on-Zygmund decomposition, applying the results of~\cite{DS23} to estimate the ``good'' part of the operator $T_j$ and the following geometric lemma to deal with the ``bad'' part of the operator. The main geometric idea employed in the proof of Lemma~\ref{L:integral-estimate} was depicted in Figure~\ref{figure}.

\begin{lemma}\label{L:integral-estimate}
Let $i, \ell\in \Z$ and let $\Omega \in L^1(\S^1,w)$, where $w(\theta_1,\theta_2)=\frac{1}{|\theta_1-\theta_2|}$ for $(\theta_1,\theta_2)\in \S^1$. Then for any $(y_1,y_2)\in \R^2$ we have
\begin{align}\label{E:int-estimate}
\int_{\frac{1}{2}\leq |(z_1,z_2)| \leq 2} \int_{\R} 2^{i+2\ell} &\chi_{\{x:~|(x-y_1-2^{-i}z_1, x-y_2-2^{-i}z_2)| \leq 2^{-i-\ell}\}}(x) \Big|\Omega\Big(\frac{(z_1,z_2)}{|(z_1,z_2)|}\Big)\Big| \,dx dz_1 dz_2\\
&\leq 16 \|\Omega\|_{L^1(\S^1,w)}.
\nonumber
\end{align}
\end{lemma}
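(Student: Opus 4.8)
The idea is to integrate in the order $dx$ first, then $dz_1\,dz_2$, exploiting the fact that the inner $x$-integral measures a chord of a two-dimensional ball along a fixed direction. We may clearly assume $\Omega$ vanishes on the ($\sigma$-null) set where $\theta_1=\theta_2$, since otherwise $\|\Omega\|_{L^1(\S^1,w)}=\infty$ and there is nothing to prove. Fix $(y_1,y_2)\in\R^2$ and $i,\ell\in\Z$. By Tonelli's theorem all integrals may be freely interchanged. For fixed $(z_1,z_2)$, as $x$ ranges over $\R$ the point $(x-y_1-2^{-i}z_1,\,x-y_2-2^{-i}z_2)$ traverses, at speed $\sqrt2$, the line in $\R^2$ through $(-y_1-2^{-i}z_1,\,-y_2-2^{-i}z_2)$ with direction $(1,1)$. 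A direct computation gives that the distance from the origin to this line equals $\tfrac1{\sqrt2}\bigl|(y_1-y_2)+2^{-i}(z_1-z_2)\bigr|$. Hence the set $\{x:\ |(x-y_1-2^{-i}z_1,\,x-y_2-2^{-i}z_2)|\le 2^{-i-\ell}\}$ is an interval, it is empty unless $\bigl|(y_1-y_2)+2^{-i}(z_1-z_2)\bigr|\le \sqrt2\,2^{-i-\ell}$, and when nonempty its length does not exceed $\sqrt2\,2^{-i-\ell}$ (the diameter $2\cdot 2^{-i-\ell}$ of the ball divided by the speed $\sqrt2$). Therefore the inner $x$-integral is at most $\sqrt2\,2^{-i-\ell}\chi_E(z_1,z_2)$, where $E=\{(z_1,z_2):\ |(y_1-y_2)+2^{-i}(z_1-z_2)|\le \sqrt2\,2^{-i-\ell}\}$, and the prefactor becomes $2^{i+2\ell}\cdot\sqrt2\,2^{-i-\ell}=\sqrt2\,2^{\ell}$.

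Next I would pass to polar coordinates $(z_1,z_2)=r(\theta_1,\theta_2)$ with $r\in[1/2,2]$ and $\theta=(\theta_1,\theta_2)\in\S^1$, so that $dz_1\,dz_2=r\,dr\,d\sigma(\theta)$ and $z_1-z_2=r(\theta_1-\theta_2)$. It then remains to bound
\[
\sqrt2\,2^{\ell}\int_{\S^1}\Bigl|\Omega(\theta)\Bigr|\int_{1/2}^{2}\chi_{\{\,|(y_1-y_2)+2^{-i}r(\theta_1-\theta_2)|\le \sqrt2\,2^{-i-\ell}\,\}}\;r\,dr\,d\sigma(\theta).
\]
For each fixed $\theta$ with $\theta_1\ne\theta_2$, the condition on $r$ defines an interval of length at most $\dfrac{2\sqrt2\,2^{-\ell}}{|\theta_1-\theta_2|}$, and using $r\le 2$ on $[1/2,2]$ the inner $r$-integral is at most $\dfrac{4\sqrt2\,2^{-\ell}}{|\theta_1-\theta_2|}$. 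Substituting this estimate cancels the powers of $2^{\ell}$ and leaves
\[
8\int_{\S^1}\frac{|\Omega(\theta)|}{|\theta_1-\theta_2|}\,d\sigma(\theta)=8\,\|\Omega\|_{L^1(\S^1,w)}\le 16\,\|\Omega\|_{L^1(\S^1,w)},
\]
which is the claimed bound (with room to spare).

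\noindent\textbf{Main point.} The only step that is not pure bookkeeping is the geometric observation that the $x$-section of the ball is a chord along the fixed direction $(1,1)$: this is precisely what produces, on the one hand, the length gain $2^{-i-\ell}$ matching the normalising factor $2^{i+2\ell}$ up to the surplus $2^{\ell}$, and, on the other hand — via the distance-from-origin computation together with the polar substitution $z_1-z_2=r(\theta_1-\theta_2)$ — the exact weight $|\theta_1-\theta_2|^{-1}$ that absorbs this surplus. Everything else is elementary.
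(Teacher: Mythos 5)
Your proof is correct and follows essentially the same route as the paper's: bound the $x$-integral for fixed $(z_1,z_2)$ by the chord length, record the resulting restriction on $(z_1,z_2)$, pass to polar coordinates, and use the weight $|\theta_1-\theta_2|^{-1}$ to absorb the length of the admissible $r$-interval. The only (harmless) differences are cosmetic: you phrase the chord estimate geometrically via the speed $\sqrt2$ along the direction $(1,1)$ and thereby obtain the slightly sharper constants $\sqrt2\,2^{-i-\ell}$ in place of the paper's $2\cdot 2^{-i-\ell}$, ending with $8\|\Omega\|_{L^1(\S^1,w)}$ rather than $16\|\Omega\|_{L^1(\S^1,w)}$.
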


\begin{proof}
Assume that $(z_1,z_2)\in \R^2$ is such that the set of those $x\in \R$ satisfying 
\begin{equation}\label{E:x}
|(x-y_1-2^{-i}z_1, x-y_2-2^{-i}z_2)| \leq 2^{-i-\ell}
\end{equation}
is nonempty. Taking any $x\in \mathbb R$ satisfying~\eqref{E:x}, we have
\[
|z_2-z_1-2^i(y_1-y_2)| \leq |z_2-2^ix+2^i y_2| + |2^i x-2^iy_1-z_1| \leq 2^{1-\ell}.
\]
In addition, we observe that for fixed $(y_1,y_2)$ and $(z_1,z_2)$, the set of those $x\in \mathbb R$ satisfying~\eqref{E:x} has measure at most $2^{1-i-\ell}$. Thus, the integral on the left-hand side of~\eqref{E:int-estimate} is bounded by
\[
2^{1+\ell}\int_{\{(z_1,z_2): ~\frac{1}{2}\leq |(z_1,z_2)| \leq 2, ~|z_2-z_1-2^i(y_1-y_2)| \leq 2^{1-\ell}\}} \Big|\Omega\Big(\frac{(z_1,z_2)}{|(z_1,z_2)|}\Big)\Big|\,dz_1 dz_2.
\]
This can be rewritten in polar coordinates as
\begin{equation}\label{E:polar}
2^{1+\ell}\int_{\S^1} |\Omega(\theta_1,\theta_2)| \int_{R(\theta_1,\theta_2)} r \,dr d\sigma(\theta_1,\theta_2),
\end{equation}
where 
\[
R(\theta_1,\theta_2)=\{r \in [1/2,2]: |(\theta_2-\theta_1)r -2^i(y_1-y_2)| \leq 2^{1-\ell}\}.
\]
Fix $(\theta_1,\theta_2)\in \S^1$. If $r\in R(\theta_1,\theta_2)$ then $(\theta_2-\theta_1)r$ belongs to an interval of length $2^{2-\ell}$, and thus the measure of $R(\theta_1,\theta_2)$ is at most $2^{2-\ell}/|\theta_2-\theta_1|$. Estimating~\eqref{E:polar} with the help of the previous observation, we obtain~\eqref{E:int-estimate}, as desired.
\end{proof}

\begin{proof}[Proof of Proposition~\ref{pr:2}]
Throughout the proof, we denote the Lebesgue measure of a measurable subset $A$ of $\R$ by $|A|$. For each interval $Q$ in $\R$, let $\ell(Q)$ and $c_Q$ denote the length and the center of $Q$, respectively.

By a simple scaling argument, we may assume that 

\begin{equation*}
\Vert f_1\Vert_{L^1(\R)}=\Vert f_2\Vert_{L^1(\R)}=\Vert\Omega\Vert_{L^1(\S^1,w)}=1.   
\end{equation*}
Then it suffices to show that for all $\delta>0$ and $\lambda>0$,
\begin{equation}\label{weakgoal}
\Big| \Big\{x\in\R : \big| T_{j}\big(f_1,f_2\big)(x)\big|>\lambda\Big\}\Big|\lesssim 2^{\delta j}\lambda^{-\frac{1}{2}},
\end{equation}
up to a constant depending on $\delta$.
We note that the arbitrary parameter $\delta>0$ corresponds to $\delta/2$ in the statement of the proposition.

Now, we perform a Calder\'on--Zygmund decomposition of $f_1$ and $f_2$ at height $\lambda^{\frac{1}{2}}$. More specifically, we express $f_1$ and $f_2$ as 
$$f_1=g_1+\sum_{Q\in \mathcal{A}}b_{1,Q}\qquad\text{and}\qquad f_2=g_2+\sum_{Q'\in \mathcal{B}}b_{2,Q'},$$
where 

\begin{enumerate}[label=(\roman*)]
    \item $\mathcal{A},\mathcal{B}$ are collections of disjoint dyadic intervals such that $$\bigg|\bigcup_{Q\in\mathcal{A}} Q\bigg|\leq \lambda^{-\frac{1}{2}}
    \quad \text{and} \quad \bigg|\bigcup_{Q'\in\mathcal{B}} Q'\bigg|\\\leq \lambda^{-\frac{1}{2}},$$ \label{itm:1}
    \item  $\supp(b_{1,Q})\subset Q$ and $\supp(b_{2,Q'})\subset Q',$\label{itm:2}
    \item $$\int_{\R}{b_{1,Q}(y)}dy=0 \quad \text{and} \quad \int_{\R} {b_{2,Q'}(y)}dy=0,$$\label{itm:3}
    \item $$\Vert b_{1,Q}\Vert_{L^1(\R)}\leq 4 \lambda^{\frac{1}{2}}|Q| \quad \text{and} \quad 
    \Vert b_{2,Q'}\Vert_{L^1(\R)}\leq 4  \lambda^{\frac{1}{2}}|Q'|,$$\label{itm:4}
    \item $$\Vert g_1\Vert_{L^r(\R)}\leq 2 \lambda^{(1-\frac{1}{r})\frac{1}{2}} \quad \text{and} \quad \Vert g_2\Vert_{L^r(\R)}\leq 2 \lambda^{(1-\frac{1}{r})\frac{1}{2}} \quad\text{for all}\quad 1\leq r\leq \infty.$$\label{itm:5}
\end{enumerate}
Given $Q \in \mathcal{A}$, we further define
\[
f_{2,Q}=g_2+\sum_{Q'\in \mathcal{B}:~\ell(Q') \leq \ell(Q)} b_{2,Q'},
\]
and similarly for $Q'\in \mathcal{B}$ we set
\[
f_{1,Q'}=g_1+\sum_{Q\in \mathcal{A}:~\ell(Q) < \ell(Q')} b_{1,Q}.
\]
Thus, we can estimate the left-hand side of \eqref{weakgoal} in the following way:
\begin{align}\label{weakgoal:eq 1}
    \Big| \Big\{x\in\R : \big| T_{j}\big(f_1,f_2\big)(x)\big|>\lambda\Big\}\Big|&\leq\Big| \Big\{x\in\R : \big|T_{j}\big(g_1,g_2\big)(x)\big|>\frac{\lambda}{3} \Big\}\Big|\\\nonumber
    &\qquad+\Big|\Big\{x\in\R : \sum_{Q \in \mathcal{A}} \Big| T_{j}\Big(b_{1,Q},f_{2,Q}\Big)(x)\Big|>\frac{\lambda}{3}\Big\}\Big|\\\nonumber
    &\qquad+\Big|\Big\{x\in\R : \sum_{Q' \in \mathcal{B}}  \Big| T_{j}\Big(f_{1,Q'},b_{2,Q'}\Big)(x)\Big|>\frac{\lambda}{3}\Big\}\Big|.
\end{align}

We estimate the first term on the right-hand side of \eqref{weakgoal:eq 1} as follows:
\begin{align*}
    \Big| \Big\{x\in\R : \big|T_{j}\big(g_1,g_2\big)(x)\big|>\frac{\lambda}{3} \Big\}\Big|&\leq 9\lambda^{-2}\Vert T_j(g_1,g_2)\Vert_{L^2(\R)}^2\\\nonumber
    &\lesssim 2^{\delta j}\lambda^{-2}\Vert g_1\Vert_{L^4(\R)}^2\Vert g_2\Vert_{L^4(\R)}^2\\\nonumber
    &\lesssim 2^{\delta j} \lambda^{-\frac{1}{2}},
\end{align*}
up to constants depending on $\delta$.
We note that the penultimate inequality follows from~\cite[Proposition 4]{DS23}, while the last inequality follows by property \ref{itm:5} of the good functions $g_1$ and $g_2$. 

Since the second and third term on the right-hand side of~\eqref{weakgoal:eq 1} are symmetric, we only estimate the second one. We first observe that this term is bounded by
\[
\bigg|\bigcup_{Q\in\mathcal{A}} Q^{*}\bigg| +\bigg|\Big\{x\in \Big( \bigcup_{Q\in\mathcal{A}}Q^*\Big)^c:~\sum_{Q \in \mathcal{A}} \Big|T_{j}\Big(b_{1,Q},f_{2,Q}\Big)(x)\Big|>\frac{\lambda}{3}\Big\}\bigg|.
\]
Here and in what follows, $Q^{*}$ stands for the interval with the same center as $Q$ satisfying $\ell(Q^*)=10\ell(Q)$. By property \ref{itm:1}, we have $$\bigg|\bigcup_{Q\in\mathcal{A}}Q^{*}\bigg|\leq 10 \bigg|\bigcup_{Q\in\mathcal{A}}Q\bigg|\leq 10 \lambda^{-\frac{1}{2}}.$$ 
We next set
\begin{align*}
    E:=\Big\{x\in \Big( \bigcup_{Q\in\mathcal{A}}Q^*\Big)^c:~\sum_{Q \in \mathcal{A}} \Big|T_{j}\Big(b_{1,Q},f_{2,Q}\Big)(x)\Big|>\frac{\lambda}{3}\Big\}
\end{align*}
and estimate $|E|$.

Chebyshev's inequality yields
\begin{align*}
    |E|&\leq 3\lambda^{-1} \sum_{Q\in\mathcal{A}} \sum_{i\in\Z} \int_{(\bigcup_{Q\in\mathcal{A}}Q^*)^c} \big|T_j^i\big(b_{1,Q},f_{2,Q} \big)(x)\big|dx\\\nonumber
    &=3\lambda^{-1} \sum_{Q\in\mathcal{A}}\sum_{i:2^{i}\ell(Q)> 1}\int_{(\bigcup_{Q\in\mathcal{A}}Q^*)^c}\big|T_j^i\big(b_{1,Q},f_{2,Q} \big)(x)\big|dx\\\nonumber
    &\qquad+3\lambda^{-1} \sum_{Q\in\mathcal{A}}\sum_{i:2^{i}\ell(Q)\leq 1}\int_{(\bigcup_{Q\in\mathcal{A}}Q^*)^c}\big|T_j^i\big(b_{1,Q},f_{2,Q} \big)(x)\big|dx,\\\nonumber
    &=:\mathcal{I}_1+\mathcal{I}_2,
\end{align*}
where $T_j^i$ is the bilinear operator associated with the kernel $K^i_j$ introduced in~\eqref{E:kij}.

We dominate the Schwartz function $\Phi$ as
\begin{equation}\label{E:est-phi}
|\Phi(a,b)|\lesssim \sum_{k=1}^{\infty}2^{-5k}\chi_{B(0,2^k)}(a,b),    
\end{equation}
where $B(0,2^k)$ stands for the two-dimensional ball centered at the origin and with radius $2^k$, and the inequality holds up to a constant depending only on  $\Phi$. Since $\Phi$ is fixed throughout the paper, the constant in~\eqref{E:est-phi} is in fact an absolute constant. 
Throughout the rest of this proof and unless stated otherwise, the constants in the relation ``$\lesssim$'' are supposed to be absolute as well.

We then estimate the term $\mathcal{I}_1$ as
\begin{align}\label{E:I1}
    \mathcal{I}_1&\lesssim \lambda^{-1} \sum_{Q\in\mathcal{A}}\sum_{i:2^{i}\ell(Q)>1} \sum_{k=1}^\infty 2^{-5k} \int_{\R^2} \int_{\frac{1}{2}\leq |(z_1,z_2)|\leq 2} \int_{(\bigcup_{Q\in\mathcal{A}}Q^*)^c} \Big|\Omega\Big(\frac{(z_1,z_2)}{|(z_1,z_2)|}\Big)\Big|2^{2i+2j}\\\nonumber
    &\quad\times\chi_{\{|(x-y_1-2^{-i}z_1,x-y_2-2^{-i}z_2)|\leq 2^{-i-j+k}\}}(x)
    |b_{1,Q}(y_1)| |f_{2,Q}(y_2)| dx dz_1 dz_2 d y_1 d y_2.
\end{align}
Assume that $x\in (\bigcup_{Q\in\mathcal{A}}Q^*)^c$ and $(y_1,y_2) \in Q \times \R$ satisfy $|(x-y_1-2^{-i}z_1,x-y_2-2^{-i}z_2)|\leq 2^{-i-j+k}$ for some $(z_1,z_2)$ with $1/2 \leq |(z_1,z_2)| \leq 2$. Since $2^i \ell(Q) >1$, we obtain
\[
\ell(Q) \leq |x-y_1-2^{-i}z_1| \leq 2^{-i-j+k} \leq 2^{-i+k},
\]
and thus the sum in $k$ on the right-hand side of~\eqref{E:I1} can be taken only over those $k$ for which $2^k \geq \ell(Q)2^{i}$. In addition,
\begin{align*}
|y_1-y_2| &\leq |y_1-x+2^{-i} z_1|+|-2^{-i}z_1+2^{-i}z_2|+|2^{-i}z_2-x+y_2|\\
&\leq 2^{1-i-j+k} +2^{2-i} \leq 2^{k+3} \ell(Q),
\end{align*}
which yields that $y_2$ has to belong to the interval $I(c_Q,2^{k+4} \ell(Q))$ centered at $c_Q$ and with radius $2^{k+4} \ell(Q)$.
Using Lemma~\ref{L:integral-estimate} with $\ell=j-k$, we then bound the right-hand side of~\eqref{E:I1} by a multiple of
\begin{align}\label{E:I1-1}
    &\lambda^{-1} \sum_{Q\in\mathcal{A}}\sum_{i:2^{i}\ell(Q)>1} \sum_{k:2^k \geq \ell(Q) 2^{i}} 2^{i-3k} \int_{\R}
    |b_{1,Q}(y_1)| dy_1 \int_{I(c_Q,2^{k+4} \ell(Q))} |f_{2,Q}(y_2)| d y_2.
\end{align}
Using property \ref{itm:5} of the function $g_2$, property~\ref{itm:4} of the functions $b_{2,Q'}$, inequality $\ell(Q') \leq \ell(Q)$ and the fact that the supports of the functions $b_{2,Q'}$ are pairwise disjoint, we obtain
\[
\int_{I(c_Q,2^{k+4} \ell(Q))} |f_{2,Q}(y_2)| d y_2 \leq \lambda^{\frac{1}{2}} 2^{k+10} \ell(Q).
\]
Therefore, \eqref{E:I1-1} is bounded by
\begin{align*}
    &\lambda^{-\frac{1}{2}} \sum_{Q\in\mathcal{A}}\sum_{i:2^{i}\ell(Q)>1} \sum_{k:2^k \geq \ell(Q) 2^{i}} 2^{i}\ell(Q) 2^{-2k} \int_{\R}
    |b_{1,Q}(y_1)| dy_1\\
    &\lesssim \lambda^{-\frac{1}{2}} \sum_{Q\in\mathcal{A}} \int_{\R}
    |b_{1,Q}(y_1)| dy_1 \sum_{i:2^{i}\ell(Q)>1} 
    (2^{i} \ell(Q))^{-1} \\
    &\lesssim \lambda^{-\frac{1}{2}}.
\end{align*}
In the above chain, we used  property~\ref{itm:4} of the functions $b_{1,Q}$ and property~\ref{itm:1} of the collection $\mathcal{A}$.

It remains to estimate the term $\mathcal{I}_2$. Let $\delta>0$ be as in~\eqref{weakgoal}. In fact, since estimate~\eqref{weakgoal} becomes weaker as $\delta$ increases, we may assume that $\delta \in (0,1/2)$. As $\Phi$ is a Schwartz function, we obtain
\begin{align*}
|\Phi(a,b)-\Phi(\widetilde{a},b)| &\lesssim \min\{|a-\widetilde{a}|, (1+|(a,b)|)^{-\frac{4}{1-\delta}}+(1+|(\widetilde{a},b)|)^{-\frac{4}{1-\delta}}\}\\
&\lesssim |a-\widetilde{a}|^\delta \big((1+|(a,b)|)^{-4}+(1+|(\widetilde{a},b)|)^{-4}\big)\\
&\lesssim |a-\widetilde{a}|^\delta \sum_{k=1}^{\infty}2^{-4k}(\chi_{B(0,2^k)}(a,b)+\chi_{B(0,2^k)}(\widetilde{a},b)),
\end{align*}
for $a$, $\widetilde{a}$, $b\in \R$. Using this estimate and the vanishing moment property \ref{itm:3} of the bad functions $b_{1,Q}$, we deduce that
\begin{align}\label{E:I2}
    \mathcal{I}_2&\lesssim    
    \lambda^{-1} \sum_{Q\in\mathcal{A}}{\sum_{i:2^{i}\ell(Q)\leq 1}} \int_{\R^2} \int_{\frac{1}{2}\leq |(z_1,z_2)|\leq  2}\int_{(\bigcup_{Q\in\mathcal{A}}Q^*)^c}\Big|\Omega\Big(\frac{(z_1,z_2)}{|(z_1,z_2)|}\Big)\Big|\\\nonumber
    &\;\;\times\big|\Phi_{i+j}(x-y_1-2^{-i}z_1,x-y_2-2^{-i}z_2)-\Phi_{i+j}(x-c_{Q}-2^{-i}z_1,x-y_2-2^{-i}z_2)\big|\\\nonumber
    &\;\;\times|b_{1,Q}(y_1)| |f_{2,Q}(y_2)|dx d z_1 d z_2 d y_1 d y_2\\\nonumber
&\lesssim \lambda^{-1} \sum_{Q\in\mathcal{A}}\sum_{i:2^{i}\ell(Q) \leq 1} \sum_{k=1}^\infty 2^{-4k} \int_{\R^2} \int_{\frac{1}{2}\leq |(z_1,z_2)|\leq 2} \int_{(\bigcup_{Q\in\mathcal{A}}Q^*)^c} \Big|\Omega\Big(\frac{(z_1,z_2)}{ |(z_1,z_2)|}\Big)\Big|2^{2i+2j}\\\nonumber
    &\;\;\times (\chi_{\{|(x-y_1-2^{-i}z_1,x-y_2-2^{-i}z_2)|\leq 2^{-i-j+k}\}}(x)+\chi_{\{|(x-c_Q-2^{-i}z_1,x-y_2-2^{-i}z_2)|\leq 2^{-i-j+k}\}}(x))\\
    &\;\;\times (2^{i+j}|y_1-c_Q|)^\delta
    |b_{1,Q}(y_1)| |f_{2,Q}(y_2)| dx dz_1 dz_2 d y_1 d y_2.
    \nonumber
\end{align}
The right-hand side of the previous display naturally splits into two terms, each involving one characteristic function in the variable $x$. We only estimate the term involving $\chi_{\{|(x-y_1-2^{-i}z_1,x-y_2-2^{-i}z_2)|\leq 2^{-i-j+k}\}}(x)$ as the other one can be treated similarly. We observe that the condition $|(x-y_1-2^{-i}z_1,x-y_2-2^{-i}z_2)|\leq 2^{-i-j+k}$ paired with $\frac{1}{2}\leq |(z_1,z_2)| \leq 2$ yields
\[
|y_1-y_2| \leq |y_1-x+2^{-i} z_1|+|-2^{-i}z_1+2^{-i}z_2|+|2^{-i}z_2-x+y_2|\leq 2^{1-i-j+k} +2^{2-i} \leq2^{2-i+k}.
\]
Using also the fact that $y_1 \in Q$ and $\ell(Q) \leq 2^{-i}$, we deduce that 
$y_2$ has to belong to the interval $I(c_Q,2^{3-i+k})$ centered at $c_Q$ and with radius $2^{3-i+k}$. Applying this, \eqref{E:I2} and Lemma~\ref{L:integral-estimate} with $\ell=j-k$, we obtain
\begin{align}\label{E:I2-2}
\mathcal{I}_2&\lesssim    
\lambda^{-1} \sum_{Q\in\mathcal{A}}\sum_{i:2^{i}\ell(Q) \leq 1} \sum_{k=1}^\infty 2^{i-2k} (2^{i+j}\ell(Q))^\delta\\ 
&\quad \times \int_{\R}
    |b_{1,Q}(y_1)| dy_1 \int_{I(c_Q,2^{3-i+k})} |f_{2,Q}(y_2)| d y_2.
    \nonumber
\end{align}
Using property \ref{itm:5} of the function $g_2$, property~\ref{itm:4} of the functions $b_{2,Q'}$, inequality $\ell(Q') \leq \ell(Q) \leq 2^{-i}$ and the fact that the supports of the functions $b_{2,Q'}$ are pairwise disjoint, we obtain
\begin{equation}\label{E:I2-3}
\int_{I(c_Q,2^{3-i+k})} |f_{2,Q}(y_2)| d y_2 \leq\lambda^{\frac{1}{2}} 2^{10-i+k}.
\end{equation}
A combination of~\eqref{E:I2-2} and \eqref{E:I2-3} yields
\begin{align*}
\mathcal{I}_2 &\lesssim
\lambda^{-\frac{1}{2}} 2^{j\delta} \sum_{Q\in\mathcal{A}}\sum_{i:2^{i}\ell(Q) \leq 1} \sum_{k=1}^\infty 2^{-k} (2^{i}\ell(Q))^\delta
\int_{\R} |b_{1,Q}(y_1)| dy_1\\
&\lesssim \lambda^{-\frac{1}{2}} 2^{j\delta},
\end{align*}
where the constant in the last inequality depends on $\delta$. We note that this inequality follows by summing geometric series in $k$ and $i$ and by using property~\ref{itm:4} of the functions $b_{1,Q}$ and property~\ref{itm:1} of the collection $\mathcal{A}$. This completes the proof.
\end{proof}

\section{Proof of Theorem~\ref{thm:main result}: Interpolation}\label{S:interpolation}

In this section, we finish the proof of Theorem~\ref{thm:main result} by performing various interpolation arguments. We first interpolate between the results of Proposition~\ref{pr:1} and Proposition~\ref{pr:2}, establishing the $L^{p_1}(\R) \times L^{p_2}(\R) \rightarrow L^p(\R)$ boundedness of the family of operators $T_j$ with a geometric decay in $j$ under somewhat stronger assumptions on $\Omega$ than those stated in Theorem~\ref{thm:main result}. 

\begin{prop}\label{pr:3}
Let $j\geq j_0$ and $1<p_1,p_2<\infty$, $1/2<p <1$ satisfy $1/p=1/p_1+1/p_2$. Suppose that $\Omega \in L^q(\S^{1},\widetilde{w}^q)$, where $q>1$, $\widetilde{w}(\theta_1,\theta_2)=\frac{1}{|\theta_1-\theta_2|^\alpha}$ for $\alpha>\frac{1}{q}$ and $(\theta_1,\theta_2)\in \S^1$. Assume that $\int_{\S^{1}} \Omega(\theta) d\sigma(\theta)=0$. Then we have 

\begin{equation}\label{main ineq 4}
\big\Vert T_{j}(f_1,f_2)\big\Vert_{L^{p}(\R)}\lesssim
2^{-j\delta}\Vert\Omega\Vert_{L^q(\S^{1},\widetilde{w}^q)}\Vert f_1\Vert_{L^{p_1}(\R)}\Vert f_2\Vert_{L^{p_2}(\R)} 
\end{equation}
for some $\delta=\delta(p_1,p_2,q)>0$, up to a multiplicative constant depending on $p_1$, $p_2$, $q$ and $\alpha$.  
\end{prop}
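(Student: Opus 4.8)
The plan is to obtain \eqref{main ineq 4} by interpolating between the weak-type endpoint estimates of Proposition~\ref{pr:1} and Proposition~\ref{pr:2}, viewing $T_j(f_1,f_2)$ as a trilinear object in $f_1$, $f_2$ and $\Omega$. Proposition~\ref{pr:1} gives, for any Banach tuple $(r_1,r_2,r)$ with $1<r,r_1,r_2<\infty$ and $1/r=1/r_1+1/r_2$, the bound
\begin{equation*}
\big\Vert T_j(f_1,f_2)\big\Vert_{L^{r,\infty}(\R)} \lesssim 2^{-j\delta_0}\Vert\Omega\Vert_{L^q(\S^1)}\Vert f_1\Vert_{L^{r_1}(\R)}\Vert f_2\Vert_{L^{r_2}(\R)},
\end{equation*}
so in particular a strong-type version on a slightly smaller open set of tuples after a first application of bilinear Marcinkiewicz interpolation (\cite[Theorem 1.1]{GLLZ12} or \cite[Theorem 3]{Ja1988}); Proposition~\ref{pr:2} gives the $L^1\times L^1\to L^{1/2,\infty}$ bound $\big\Vert T_j(f_1,f_2)\big\Vert_{L^{1/2,\infty}(\R)}\lesssim 2^{j\delta}\Vert\Omega\Vert_{L^1(\S^1,w)}\Vert f_1\Vert_{L^1}\Vert f_2\Vert_{L^1}$ for every $\delta>0$, where $w=|\theta_1-\theta_2|^{-1}$. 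First I would fix a Banach exponent triple $(s_1,s_2,s)$, with $s$ close to $1$ from below, for which Proposition~\ref{pr:1} holds with a strong-type conclusion (obtainable by absorbing the weak-to-strong passage into the $2^{-j\delta}$ decay, since one may shrink $\delta$ slightly), and then interpolate the two strong-type trilinear estimates
\begin{equation*}
L^{s_1}\times L^{s_2}\times L^q(\S^1) \to L^s, \qquad L^1\times L^1\times L^1(\S^1,w) \to L^{1/2}
\end{equation*}
using a trilinear Riesz--Thorin/Stein-type complex interpolation.

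The weights on the $\Omega$-factor are the point of the exercise. On $\S^1$ the only relevant weight is a power of $|\theta_1-\theta_2|$, so write the two endpoint weights as $v_0\equiv 1$ (exponent $0$) and $v_1=|\theta_1-\theta_2|^{-1}$ (exponent $-1$). Interpolating with parameter $\theta\in(0,1)$ produces the intermediate Lebesgue exponent $\tilde q$ on $\S^1$ determined by $1/\tilde q=(1-\theta)/q+\theta$ and the intermediate weight $|\theta_1-\theta_2|^{-\theta}$ raised to the power $\tilde q$, i.e.\ the weighted space $L^{\tilde q}(\S^1,\widetilde w^{\tilde q})$ with $\widetilde w=|\theta_1-\theta_2|^{-\alpha}$ where $\alpha=\theta\cdot$ (appropriate normalisation). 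Simultaneously the target exponent becomes $1/p=(1-\theta)/s+\theta/(1/2)$ and the input exponents $1/p_i=(1-\theta)/s_i+\theta/1$, which, as $s\uparrow 1$, sweep out exactly the range $1/2<p<1$, $1<p_1,p_2<\infty$, $1/p=1/p_1+1/p_2$ claimed in the proposition; the constraint $\alpha>1/q$ on the weight exponent then corresponds precisely to $\tilde q$ being the exponent that shows up after matching $p$ and $q$, so the hypotheses $q>1$, $\alpha>1/q$ of Proposition~\ref{pr:3} are exactly the image of the interpolation constraints. The geometric decay factor interpolates as $2^{-j\delta_0(1-\theta)}\cdot 2^{j\delta\theta}$; choosing $\delta$ in Proposition~\ref{pr:2} small enough (which is allowed, since \eqref{main ineq 3} holds for \emph{every} $\delta>0$) the product is $2^{-j\delta'}$ with $\delta'=\delta'(p_1,p_2,q)>0$, giving the asserted decay in \eqref{main ineq 4}.

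Concretely the steps are: (1) upgrade Proposition~\ref{pr:1} to a strong-type trilinear estimate on an open set of Banach tuples via bilinear Marcinkiewicz, keeping a (possibly smaller) positive power $2^{-j\delta_0}$; (2) record Proposition~\ref{pr:2} as a strong-type trilinear estimate $L^1\times L^1\times L^1(\S^1,w)\to L^{1/2}$ — here one uses that $L^{1/2}$ and $L^{1/2,\infty}$ differ, so actually one interpolates \emph{into} the weak space and recovers strong $L^p$ for $p<1/2^{-1}$... more carefully, one interpolates the weak-type endpoints directly by a multilinear Marcinkiewicz-type theorem valid for quasi-Banach targets, or first moves Proposition~\ref{pr:2} to a nearby strong-type estimate by the same trick as in (1); (3) apply a trilinear complex interpolation theorem for weighted Lebesgue spaces (the scalar weights on $\S^1$ being powers of $|\theta_1-\theta_2|$, so \cite[Theorem 3.1]{COY22} — reproduced as Proposition~\ref{thm:COY} — applies, or a direct Stein interpolation) to pass from the two endpoints to the intermediate point; (4) verify the bookkeeping: that as the Banach endpoint tuple ranges appropriately, the intermediate tuples fill out all of $1/2<p<1$, $1<p_1,p_2<\infty$, $1/p=1/p_1+1/p_2$, and that the intermediate weight exponent $\alpha$ ranges over exactly $(1/q,\infty)$; (5) track the $j$-decay and choose the free parameter $\delta$ in Proposition~\ref{pr:2} to make the interpolated exponent of $2^j$ negative. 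The main obstacle I anticipate is step~(2)/(3): getting a \emph{strong}-type interpolated conclusion out of two \emph{weak}-type endpoints in the genuinely quasi-Banach regime $p<1$ with a weight on one of the factors — there is no off-the-shelf weighted multilinear weak-type interpolation theorem (as the authors themselves note), so one must either first convert each endpoint to strong type at the cost of shrinking $\delta_0$ and perturbing the tuple, or prove an ad hoc multilinear Marcinkiewicz statement sufficient for the powers-of-$|\theta_1-\theta_2|$ weights at hand; the rest is routine exponent arithmetic.
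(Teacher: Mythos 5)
Your proposal and the paper diverge right at the starting point, and the divergence matters. You set out to interpolate \emph{trilinearly} in $(\Omega,f_1,f_2)$ between Proposition~\ref{pr:1} and Proposition~\ref{pr:2}, and you correctly flag the resulting obstacle — there is no off-the-shelf weighted multilinear interpolation theorem that accepts weak-type hypotheses in the quasi-Banach range, and Proposition~\ref{thm:COY} requires strong-type inputs on both sides — but you do not resolve it. Your suggested fix, ``first move Proposition~\ref{pr:2} to a nearby strong-type estimate by the same trick as in (1),'' does not go through: Proposition~\ref{pr:2} furnishes a weak-type bound at the \emph{single} tuple $L^1\times L^1\to L^{1/2,\infty}$, and bilinear Marcinkiewicz requires weak-type input on a neighbourhood of tuples to produce strong-type output, which is information you do not have at that endpoint. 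So step (2)/(3) of your outline is a genuine gap, not just a routine complication.

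The paper sidesteps the whole issue with an observation you missed: one can \emph{embed} $L^q(\S^1,\widetilde{w}^q)$ into both source spaces that appear on the right of Propositions~\ref{pr:1} and~\ref{pr:2}. Concretely, $L^q(\S^{1},\widetilde{w}^q)\hookrightarrow L^{q}(\S^{1})$ because $\widetilde{w}^q\ge c>0$, and $L^q(\S^{1},\widetilde{w}^q)\hookrightarrow L^1(\S^{1},w)$ by H\"older,
\[
\Vert\Omega\Vert_{L^1(\S^{1},w)}
\le\Vert\Omega\Vert_{L^q(\S^{1},\widetilde{w}^q)}\Big(\int_{\S^1}|\theta_1-\theta_2|^{-(1-\alpha)q'}\,d\sigma\Big)^{1/q'},
\]
the last integral being finite precisely when $\alpha>1/q$ (this is where the hypothesis enters — it is not an output of the interpolation but the exact condition for the embedding). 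After these embeddings, both endpoint estimates carry the \emph{same} factor $\Vert\Omega\Vert_{L^q(\S^{1},\widetilde{w}^q)}$. At that point $\Omega$ is frozen, the trilinear structure disappears, and one simply applies the bilinear Marcinkiewicz theorem (\cite[Theorem 1.1]{GLLZ12} or \cite[Theorem 3]{Ja1988}) to the weak-type bounds at the three tuples $(1,1)$, $(1-2\varepsilon,\varepsilon)$, $(\varepsilon,1-2\varepsilon)$ to obtain strong $L^{p_1}\times L^{p_2}\to L^p$ for every $1/2<p<1$. Bilinear Marcinkiewicz is perfectly comfortable with weak hypotheses and quasi-Banach targets, so the difficulty you were fighting never arises. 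The decay is then handled exactly as you describe: the $2^{j\delta_1}$ growth from Proposition~\ref{pr:2} can be made arbitrarily slow, so choosing $\delta_1$ small enough and taking the convex combination with $-\delta_0$ gives a net negative exponent. The trilinear weighted interpolation via \cite[Theorem 3.1]{COY22} \emph{is} used in the paper, but only later, in the final passage from Proposition~\ref{pr:4} to Theorem~\ref{thm:main result}(i), where by then only strong-type inputs are needed.
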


\begin{proof}
Let $q>1$ and let $w$ be the weight introduced in the statement of Proposition~\ref{pr:2}. We start by establishing the embeddings $L^q(\S^{1},\widetilde{w}^q)\hookrightarrow L^{q}(\S^{1})$ and $L^q(\S^{1},\widetilde{w}^q)\hookrightarrow L^1(\S^{1},w)$. The first embedding follows directly from the fact that $\widetilde{w}^q$ is bounded from below by a positive constant, while the second one is proved using H{\"o}lder's inequality as follows:
\begin{align*}
\Vert\Omega\Vert_{L^1(\S^{1},w)}&=\int_{\S^1}|\Omega(\theta_1,\theta_2)|\frac{1}{|\theta_1-\theta_2|^{\alpha}}\frac{1}{|\theta_1-\theta_2|^{1-\alpha}}\;d\sigma(\theta_1,\theta_2)\\
&\leq\Vert\Omega\Vert_{L^q(\S^{1},\widetilde{w}^q)}\bigg(\int_{\S^1}\frac{1}{|\theta_1-\theta_2|^{(1-\alpha)q'}}\;d\sigma(\theta_1,\theta_2)\bigg)^{\frac{1}{q'}},
\end{align*}
where the last integral in the previous inequality is finite since $\alpha>\frac{1}{q}$.

Let $\varepsilon \in (0,1/2)$. Thanks to the aforementioned embeddings, it follows from Proposition ~\ref{pr:1} and  Proposition ~\ref{pr:2} that for some $\delta_0=\delta_0(\varepsilon,q)>0$ and for arbitrary $\delta_1>0$,
\begin{align*}
\big\Vert T_{j}\big\Vert_{L^1(\R)\times L^1(\R)\to L^{\frac{1}{2},\infty}(\R)}&\lesssim 2^{j\delta_1}\Vert\Omega\Vert_{L^q(\S^{1},\widetilde{w}^q)},\\
\big\Vert T_{j}\big\Vert_{L^\frac{1}{1-2\varepsilon}(\R)\times L^\frac{1}{\varepsilon}(\R)\to L^{\frac{1}{1-\varepsilon},\infty}(\R)}&\lesssim 2^{-j\delta_0}\Vert\Omega\Vert_{L^q(\S^{1},\widetilde{w}^q)},\\
\big\Vert T_{j}\big\Vert_{L^\frac{1}{\varepsilon}(\R)\times L^\frac{1}{1-2\varepsilon}(\R)\to L^{\frac{1}{1-\varepsilon},\infty}(\R)}&\lesssim 2^{-j\delta_0}\Vert\Omega\Vert_{L^q(\S^{1},\widetilde{w}^q)},
\end{align*}
up to constants depending on $\varepsilon$, $q$, $\delta_1$ and $\alpha$.

Now, we fix $1<p_1,p_2<\infty$ and $p<1$ satisfying $1/p=1/p_1+1/p_2$. By choosing $\varepsilon>0$ small enough depending on $p_1$, $p_2$, we may ensure that the point $(1/p_1,1/p_2)$ lies in the interior of the convex hull of the points $(1,1)$, $(1-2\varepsilon,\varepsilon)$ and $(\varepsilon, 1-2\varepsilon)$. Therefore, there exist  $0<\theta_0,\theta_1,\theta_2<1$ with 
\begin{equation*}
\theta_0+\theta_1+\theta_2=1
\end{equation*}
such that 
\begin{align}
\frac{1}{p_1}&=\theta_0+(1-2\varepsilon)\theta_1+\varepsilon\theta_2,\label{main ineq 6}\\
\frac{1}{p_2}&=\theta_0+\varepsilon\theta_1+(1-2\varepsilon)\theta_2\label{main ineq 7}.
\end{align}
Adding equations~\eqref{main ineq 6} and~\eqref{main ineq 7}, we also obtain
\[
\frac{1}{p}=2\theta_0+(1-\varepsilon)\theta_1+(1-\varepsilon)\theta_2.
\]
An application of the bilinear version of the Marcinkiewicz interpolation theorem (see \cite[Theorem 1.1]{GLLZ12} or \cite[Theorem 3]{Ja1988}) yields that
\[
\big\Vert T_{j}\big\Vert_{L^{p_1}(\R)\times L^{p_2}(\R)\to L^{p}(\R)}\lesssim 
2^{j\delta_1 \theta_0-j\delta_0 \theta_1-j\delta_0 \theta_2}\Vert\Omega\Vert_{L^q(\S^{1},\widetilde{w}^q)},
\]
up to a constant depending on $p_1$, $p_2$, $q$, $\delta_1$ and $\alpha$.
Choosing $0<\delta_1<\delta_0\frac{\theta_1+\theta_2}{\theta_0}$ and setting $\delta=-\delta_1 \theta_0+\delta_0 \theta_1+\delta_0 \theta_2>0$, we obtain the desired inequality~\eqref{main ineq 4}.
\end{proof}

By combining Proposition \ref{pr:3} and \cite[Proposition 3]{GHH18}, we establish the following boundedness result for the bilinear operator $T_{\Omega}$.

\begin{prop}\label{pr:4}
Let $1<p_1,p_2<\infty$, $1/2<p <1$ satisfy $1/p=1/p_1+1/p_2$ and suppose that $\Omega \in L^q(\S^{1},\widetilde{w}^q)$, where $q>1$, $\widetilde{w}(\theta_1,\theta_2)=\frac{1}{|\theta_1-\theta_2|^\alpha}$ for $\alpha>\frac{1}{q}$ and $(\theta_1,\theta_2)\in \S^1$. Assume that $\int_{\S^{1}} \Omega(\theta) d\sigma(\theta)=0$. Then we have 

\begin{equation}\label{main ineq 17}
\big\Vert T_{\Omega}(f_1,f_2)\big\Vert_{L^{p}(\R)}\lesssim
\Vert\Omega\Vert_{L^q(\S^{1},\widetilde{w}^q)}\Vert f_1\Vert_{L^{p_1}(\R)}\Vert f_2\Vert_{L^{p_2}(\R)},
\end{equation}
 up to a multiplicative constant depending on $p_1$, $p_2$, $q$ and $\alpha$. 
\end{prop}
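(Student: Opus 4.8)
The plan is to use the decomposition $T_\Omega=\sum_{j\in\Z}T_j$ introduced in Section~\ref{S:overview} and to sum geometric estimates for the individual pieces $T_j$ over the two regimes $j\geq j_0$ and $j<j_0$. In the first regime Proposition~\ref{pr:3} already supplies the bound $\|T_j(f_1,f_2)\|_{L^p(\R)}\lesssim 2^{-j\delta}\|\Omega\|_{L^q(\S^1,\widetilde{w}^q)}\|f_1\|_{L^{p_1}(\R)}\|f_2\|_{L^{p_2}(\R)}$ with $\delta=\delta(p_1,p_2,q)>0$ and an implicit constant depending on $p_1,p_2,q,\alpha$. In the second regime I would invoke \cite[Proposition 3]{GHH18}: for $j<j_0$ the kernels $K_j$ are smooth and satisfy standard size and regularity bounds, so the multilinear Calder\'on--Zygmund theory of \cite{CM75,GT} yields $\|T_j(f_1,f_2)\|_{L^p(\R)}\lesssim 2^{j\delta'}\|\Omega\|_{L^q(\S^1)}\|f_1\|_{L^{p_1}(\R)}\|f_2\|_{L^{p_2}(\R)}$ for some $\delta'>0$. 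To match the right-hand side with the weighted norm appearing in~\eqref{main ineq 17}, I would use the embedding $L^q(\S^1,\widetilde{w}^q)\hookrightarrow L^q(\S^1)$ recorded in the proof of Proposition~\ref{pr:3}, which holds because $\widetilde{w}^q$ is bounded below by a positive constant.

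It then remains to sum over $j\in\Z$. Since $1/2<p<1$, the target space $L^p(\R)$ is only quasi-normed, but it obeys the $p$-triangle inequality $\|\sum_j g_j\|_{L^p}^p\leq\sum_j\|g_j\|_{L^p}^p$. Applying this with $g_j=T_j(f_1,f_2)$ and inserting the two estimates above bounds $\|T_\Omega(f_1,f_2)\|_{L^p(\R)}^p$ by $\big(\sum_{j\geq j_0}2^{-jp\delta}+\sum_{j<j_0}2^{jp\delta'}\big)\|\Omega\|_{L^q(\S^1,\widetilde{w}^q)}^p\|f_1\|_{L^{p_1}(\R)}^p\|f_2\|_{L^{p_2}(\R)}^p$; both geometric series converge because $\delta,\delta'>0$, and taking $p$-th roots gives~\eqref{main ineq 17}. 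One should also note that $\sum_jT_j(f_1,f_2)$ indeed represents $T_\Omega(f_1,f_2)$, which is part of the Duoandikoetxea--Rubio de Francia framework set up in Section~\ref{S:overview} and poses no difficulty for Schwartz inputs.

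This argument is essentially bookkeeping, and I do not expect a genuine obstacle. The only point calling for mild care is the regime $j<j_0$: one must ensure that the estimate borrowed from \cite{GHH18} comes with a geometric gain in $|j|$ and depends on $\Omega$ only through a norm controlled by $\|\Omega\|_{L^q(\S^1,\widetilde{w}^q)}$ (for instance $\|\Omega\|_{L^1(\S^1)}$ or $\|\Omega\|_{L^q(\S^1)}$, both dominated by the weighted norm since $\widetilde{w}^q\gtrsim 1$). Should only an $L^\infty$-based bound be available there, one would instead re-run the Calder\'on--Zygmund estimate for these smooth pieces directly under the $L^q$ hypothesis, which is routine. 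The quasi-Banach nature of $L^p$ is absorbed entirely by the $p$-triangle inequality.
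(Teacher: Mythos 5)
Your argument is correct and follows the paper's proof essentially step for step: decompose $T_\Omega=\sum_j T_j$, invoke Proposition~\ref{pr:3} (geometric decay $2^{-j\delta}$) for $j\ge j_0$ and \cite[Proposition 3]{GHH18} (geometric gain, recorded in the paper as $2^{j/2}$) for $j<j_0$, pass to the weighted norm via the embedding $L^q(\S^1,\widetilde{w}^q)\hookrightarrow L^q(\S^1)$, and sum using the $p$-triangle inequality. The caveat you raise about the $j<j_0$ estimate is indeed moot, since \cite[Proposition 3]{GHH18} is stated with an $L^q(\S^1)$-based bound exactly as you use it.
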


\begin{proof}
We recall that $T_{\Omega}$ can be decomposed as
\begin{equation*}
T_{\Omega}=\sum_{j=-\infty}^{\infty}T_j.
\end{equation*}
Then 
\begin{equation}\label{main ineq Tomega}
\big\Vert T_{\Omega}(f_1,f_2)\big\Vert_{L^p(\R)}\le
\Big(\sum_{j\in\Z}\big\Vert T_{j}(f_1,f_2)\big\Vert_{L^p(\R)}^{p}\Big)^{\frac{1}{p}},\quad\text{where}\quad 0<p<1.
\end{equation}
It follows from \cite[Proposition 3]{GHH18} that 
\begin{equation*}
\big\Vert T_{j}(f_1,f_2)\big\Vert_{L^p(\R)}\lesssim 2^{j/2} \Vert\Omega\Vert_{L^{q}(\S^1)}\Vert
f_1\Vert_{L^{p_1}(\R)}\Vert f_2\Vert_{L^{p_2}(\R)},
\end{equation*}
up to a constant depending on $p_1$, $p_2$ and $q$. 
Consequently,
\begin{equation*}
\Big( \sum_{j<j_0} \big\Vert
T_{j}(f_1,f_2)\big\Vert_{L^p(\R)}^{p}\Big)^{\frac{1}{p}}\lesssim \Vert \Omega\Vert_{L^q(\mathbb{S}^{1})}\Vert
f_1\Vert_{L^{p_1}(\R)}\Vert f_2\Vert_{L^{p_2}(\R)}.    
\end{equation*}
Now, the embedding $L^q(\S^{1},\widetilde{w}^q)\hookrightarrow L^{q}(\S^{1})$, where $\widetilde{w}(\theta_1,\theta_2)=\frac{1}{|\theta_1-\theta_2|^{\alpha}}$ for $\alpha>\frac{1}{q}$, implies that 
\begin{equation}\label{main ineq Tj<j0}
\Big( \sum_{j<j_0} \big\Vert
T_{j}(f_1,f_2)\big\Vert_{L^p(\R)}^{p}\Big)^{\frac{1}{p}}\lesssim \Vert\Omega\Vert_{L^{q}(\S^1,\widetilde{w}^q)}\Vert
f_1\Vert_{L^{p_1}(\R)}\Vert f_2\Vert_{L^{p_2}(\R)}.
\end{equation}
In~\eqref{main ineq Tj<j0} as well as in the remaining part of this proof, the constant in the relation $\lesssim$ is allowed to depend on $p_1$, $p_2$, $q$ and $\alpha$. 
By recalling Proposition \ref{pr:3}, we know that for $j\ge j_0$ there exists a small constant $\delta_0=\delta_0(p_1,p_2,q)>0$, such that

\begin{equation*}
\big\Vert T_{j}(f_1,f_2)\big\Vert_{L^p(\R)}\lesssim 2^{-j\delta_0} \Vert\Omega\Vert_{L^{q}(\S^1,\widetilde{w}^q)}\Vert
f_1\Vert_{L^{p_1}(\R)}\Vert f_2\Vert_{L^{p_2}(\R)},
\end{equation*}
which implies that
\begin{equation}\label{main ineq Tj>j0}
\Big(\sum_{j\ge j_0} \big\Vert
T_{j}(f_1,f_2)\big\Vert_{L^p(\R)}^{p}\Big)^{\frac{1}{p}}\lesssim \Vert\Omega\Vert_{L^{q}(\S^1,\widetilde{w}^q)}\Vert f_1\Vert_{L^{p_1}(\R)}\Vert f_2\Vert_{L^{p_2}(\R)}.    
\end{equation}
Hence, by combining \eqref{main ineq Tomega}, \eqref{main ineq Tj<j0} and \eqref{main ineq Tj>j0} we conclude the desired estimate \eqref{main ineq 17}.
\end{proof}

In order to finish the proof of Theorem~\ref{thm:main result}, we will interpolate further between 
Proposition~\ref{pr:4} and \cite[Theorem 1]{DS23} via Proposition \ref{thm:COY}.

\begin{proof}[Proof of Theorem~\ref{thm:main result}, part \textup{(i)}]
In order to proceed with our interpolation argument, we view $T_\Omega$ as a trilinear operator. More specifically, let 

\begin{equation*}
\widetilde{T}(\Omega, f_1,f_2)(x):= p.v. \int_{\R^{2}} \left(\Omega - 2Avg(\Omega) \chi_{\{(\theta_1,\theta_2): \theta_1 \theta_2 <0\}}\right)(y')|y|^{-2} f_1(x-y_1)f_2(x-y_2) dy,   
\end{equation*}
where $Avg(\Omega) =\frac{\int_{\S^{1}} \Omega d\s}{2\pi}$ is the mean value of $\Omega$. Notice that $\widetilde{T}$ is trilinear and $\widetilde{T}(\Omega,f_1,f_2)$ agrees with $T_\Omega(f_1,f_2)$ for every $\Omega$ with mean value zero and for any Schwartz functions $f_1,f_2$. 

Let $\varepsilon\in(0,1/2)$, $\beta\in(\varepsilon,1-\varepsilon)$ and $\widetilde{q}\in(1,\infty)$. It follows from Proposition ~\ref{pr:4} and \cite[Theorem 1]{DS23} that 
\begin{align}
&\big\Vert \widetilde{T}\big\Vert_{L^{\tilde{q}}(\S^{1},\widetilde{v}^{\tilde{q}}) \times L^\frac{1}{1-\varepsilon}(\R)\times L^\frac{1}{1-\varepsilon}(\R)\to L^{\frac{1}{2-2\varepsilon}}(\R)}\lesssim 1\label{main ineq 18},\\
&\big\Vert \widetilde{T}\big\Vert_{L^{1+\varepsilon}(\S^{1}) \times L^\frac{1}{\beta-\varepsilon}(\R)\times L^\frac{1}{1-\beta-\varepsilon}(\R)\to L^{\frac{1}{1-2\varepsilon}}(\R)}\lesssim 1\label{main ineq 19}.
\end{align}
Here, $\widetilde{v}(\theta_1,\theta_2)=\frac{1}{|\theta_1-\theta_2|^{\widetilde{\alpha}}}$ for $\widetilde{\alpha}>1/\widetilde{q}$ and $(\theta_1,\theta_2)\in \S^1$, and the above inequalities hold
up to constants depending on $\varepsilon$, $\beta$, $\widetilde{q}$ and $\widetilde{\alpha}$.
Now, we fix $1<p_1,p_2<\infty$, $p$ satisfying $1/p=1/p_1+1/p_2$, and suppose $q>1$ is such that $1/p+1/q\geq 2$. Then for any $\varepsilon>0$ small enough, we choose the parameters $\beta$ and $\widetilde{q}$ in inequalities~\eqref{main ineq 18} and~\eqref{main ineq 19} in such a way that the point $(1/q, 1/p_1,1/p_2)$ lies inside the line segment in $\R^3$ connecting the points $(1/\widetilde{q},1-\varepsilon, 1-\varepsilon)$ and $(1/(1+\varepsilon), \beta-\varepsilon, 1-\beta-\varepsilon)$. More specifically, the following equations are satisfied for some $0<\theta<1$:
\begin{align}
\frac{1}{p_1}&=(1-\varepsilon)(1-\theta)+(\beta-\varepsilon)\theta,\label{main ineq 20}\\
\frac{1}{p_2}&=(1-\varepsilon)(1-\theta)+(1-\beta-\varepsilon)\theta,\label{main ineq 21}\\
\frac{1}{p}&=(2-2\varepsilon)(1-\theta)+(1-2\varepsilon)\theta,\label{main ineq 22}\\
\frac{1}{q}&=\frac{1}{\tilde{q}}(1-\theta)+\frac{\theta}{1+\varepsilon}\label{main ineq 23}.
\end{align}
It is not hard to see that from \eqref{main ineq 20}-\eqref{main ineq 23} we get that 
\begin{align*}
\beta&=\frac{1}{2-2\varepsilon-\frac{1}{p}}\bigg(\varepsilon-1+\frac{1}{p_1}\bigg)+1,\\
\widetilde{q}&=\frac{(1+\varepsilon)(-1+2\varepsilon+\frac{1}{p})}{-2+\frac{1}{p}+\frac{1}{q}+2\varepsilon+\frac{\varepsilon}{q}},\\
\theta&=2-2\varepsilon-\frac{1}{p}.
\end{align*}
An application of Proposition \ref{thm:COY} implies that
\begin{equation}\label{E:widetilde-T}
\big\Vert \widetilde{T}\big\Vert_{L^q(\S^{1},u^q) \times L^{p_1}(\R)\times L^{p_2}(\R)\to L^{p}(\R)}\lesssim
1,
\end{equation}
up to constants depending on $p_1$, $p_2$, $q$, $\varepsilon$ and $\widetilde{\alpha}$. Here,
\[
u(\theta_1,\theta_2)=\widetilde{v}(\theta_1,\theta_2)^{1-\theta}=\bigg(\frac{1}{|\theta_1-\theta_2|}\bigg)^{\widetilde{\alpha}(-1+2\varepsilon+\frac{1}{p})} \quad \text{for} \quad \widetilde{\alpha}>\frac{-2+\frac{1}{p}+\frac{1}{q}+2\varepsilon+\frac{\varepsilon}{q}}{(1+\varepsilon)(-1+2\varepsilon+\frac{1}{p})}.
\]
Choosing 
\begin{equation*}
\widetilde{\alpha}=\frac{(1+\varepsilon)(-2+\frac{1}{p}+\frac{1}{q}+2\varepsilon)+\varepsilon}{(1+\varepsilon)(-1+2\varepsilon+\frac{1}{p})},
\end{equation*}
we get that 
\begin{equation*}
u(\theta_1,\theta_2)=\bigg(\frac{1}{|\theta_1-\theta_2|}\bigg)^{-2+\frac{1}{p}+\frac{1}{q}+2\varepsilon+\frac{\varepsilon}{1+\varepsilon}}=|\theta_1-\theta_2|^{2-\frac{1}{p}-\frac{1}{q}-\widetilde{\varepsilon}},    
\end{equation*}
where $\widetilde{\varepsilon}=2\varepsilon+\frac{\varepsilon}{1+\varepsilon}$. We observe that any sufficiently small $\widetilde{\varepsilon}$ can be expressed this way for a suitable choice of the parameter $\varepsilon$. The constant in inequality~\eqref{E:widetilde-T} then depends on $p_1$, $p_2$, $q$ and $\widetilde{\varepsilon}$. This completes the proof. 
\end{proof}

\section{Proof of Theorem~\ref{thm:main result}: Counterexample}\label{S:counterexamle n=1}

In this section, we prove the second part of Theorem~\ref{thm:main result}.

\begin{proof}[Proof of Theorem~\ref{thm:main result}, part \textup{(ii)}]
Throughout this proof, we set $\gamma=\frac{qp+q}{p+q}$.
For $i=1,2$, we consider the functions 
\begin{equation*}
f_i(x)=x^{-1/p_i}\big|\log{x}\big|^{-\gamma/p_i}\chi_{(0, \frac12)}(x), \quad x\in \R, 
\end{equation*}
and observe that $f_i\in L^{p_i}(\R)$. In addition, for $(\theta_1,\theta_2)\in \S^1$ we define
\begin{equation*}
\Omega(\theta_1,\theta_2)=\operatorname{sgn}(\theta_1) |\theta_1-\theta_2|^{\frac{1}{p}-2} |\log|\theta_1-\theta_2||^{-\frac{\gamma}{q}}\chi_{(-\frac{1}{2},\frac{1}{2})}(\theta_1-\theta_2).
\end{equation*}
We claim that $\Omega$ is an odd function satisfying $\Omega\in L^q(\S^{1},u^q)$, where $u(\theta_1,\theta_2)=|\theta_1-\theta_2|^{2-\frac{1}{p}-\frac{1}{q}}$. Indeed, by applying the change of coordinates $u_1=\theta_1-\theta_2$ and $u_2=\theta_1+\theta_2$, we have
\begin{align*}
\int_{\S^1}|\Omega(\theta_1,\theta_2)|^q &|\theta_1-\theta_2|^{2q-\frac{q}{p}-1} d\sigma(\theta_1,\theta_2)\\
&=2\int_{\sqrt2\S^1}|u_1|^{-1}|\log|u_1||^{-\gamma}\chi_{(-\frac12,\frac12)}(u_1)d\sigma(u_1,u_2)\\
&=8\sqrt{2}\int_{0}^{\frac{1}{2}}|v|^{-1}|\log|v||^{-\gamma}(2-|v|^2)^{-\frac{1}{2}}dv\\
&\leq 8 \sqrt{2} \int_{0}^{\frac{1}{2}}|v|^{-1}|\log|v||^{-\gamma}dv<\infty,
\end{align*}
where the second equality follows by using the co-area formula \cite[Appendix D]{GClassical}.

Let $x\in\R$ satisfy $x>10$. Then

\begin{equation*}
T_{\Omega}(f_1,f_2)(x)=\int_{(0,\frac{1}{2})^2} \left(\prod_{i=1}^2 y_i^{-1/p_i}\left|\log y_i \right|^{-\gamma/p_i}\right)
\frac{\Omega\bigg(\frac{(x-y_1,x-y_2)}{|(x-y_1,x-y_2)|}\bigg)}{|(x-y_1,x-y_2)|^2}dy_1 dy_2.
\end{equation*}
We observe that the integrand in the above integral is positive since $x-y_1$ is positive whenever $x>10$ and $y_1\in (0,1/2)$. Hence, we estimate $T_{\Omega}(f_1,f_2)(x)$ from below by the integral over the smaller region

\begin{equation*}
A = \left\{(y_1,y_2)\in\bigg(0,\frac{1}{2}\bigg)^2\,:\,2y_1 \leq y_2\leq 4y_1\right\}.
\end{equation*}
In this region, 
\[
\frac{|y|}{\sqrt{17}} \leq y_1\leq |y_1-y_2| \leq 3y_1 \leq 3|y|
\]
and
\[
x \leq |(x-y_1,x-y_2)| \leq \sqrt{2} x.
\]
Therefore
\begin{align*}
\Omega\bigg(\frac{(x-y_1,x-y_2)}{|(x-y_1,x-y_2)|}\bigg)&=\left|\frac{y_1- y_2}{|(x-y_1,x-y_2)|}\right|^{\frac{1}{p}-2} 
\left|\log{\left|\frac{y_1- y_2}{|(x-y_1,x-y_2)|}\right|}\right|^{-\frac{\gamma}{q}}\\
&\gtrsim x^{-\frac{1}{p}+2}|y|^{\frac{1}{p}-2}\left|\log\frac{|y|}{x}\right|^{-\frac{\gamma}{q}}.
\end{align*}
We remark that in the display above as well as in the remaining part of this proof, the constant in the inequality ``$\gtrsim$'' is allowed to depend on $p_1$, $p_2$ and $q$. 
After a change of variables, we obtain
\begin{align*}
T_{\Omega}(f_1,f_2)(x)&\gtrsim x^{-\frac{1}{p}}\int_{A}|y|^{-2}\left|\log|y|\right|^{-\frac{\gamma}{p}}\left|\log\frac{|y|}{x}\right|^{-\frac{\gamma}{q}}dy\\
&\gtrsim x^{-\frac{1}{p}}\int_{0}^{\frac{1}{2}}\left|\log r\right|^{-\frac{\gamma}{p}}\left|\log\frac{r}{x}\right|^{-\frac{\gamma}{q}}\frac{dr}{r}\\
&=x^{-\frac{1}{p}}\int_{\log 2}^{\infty}t^{-\frac{\gamma}{p}}\left|t+\log x\right|^{-\frac{\gamma}{q}}dt\\
&\gtrsim x^{-\frac{1}{p}}\int_{\log 2}^{\infty}(t+\log x)^{-\frac{\gamma}{p}-\frac{\gamma}{q}}dt
\gtrsim x^{-\frac{1}{p}}(\log x)^{(1-\frac{\gamma}{p}-\frac{\gamma}{q})}.
\end{align*}
Therefore,
\begin{equation*}
\Vert T_{\Omega}(f_1,f_2)\Vert_{L^{p}(\R)}^p\gtrsim\int_{10}^\infty x^{-1}(\log x)^{p(1-\frac{\gamma}{p}-\frac{\gamma}{q})}dx.
\end{equation*}
Thanks to our choice of $\gamma$, the above integral becomes infinite, i.e.\ $T_{\Omega}(f_1,f_2)\notin L^{p}(\R)$, as desired.
\end{proof}

\section{Proof of Theorem~\ref{T:counterexample}}\label{S:counterexamle-higher-n}

In this section, we provide counterexamples justifying the validity of Theorem~\ref{T:counterexample}.

\begin{proof}[Proof of Theorem~\ref{T:counterexample}]
\textup{(i)}
For a fixed combination of indices $p_1,p_2,p,q,n$ which satisfies the assumptions of the first part of the theorem, we will construct odd functions $\Omega$ supported in the specified set with $\|\Omega\|_{L^q(\mathbb S^{2n-1})}=1$ such that the norm of the operator $T_\Omega$ is arbitrarily large. The existence of the actual unbounded operator will then follow via usual summation arguments.

Throughout the first part of the proof, for a given $a\in \mathbb R^n$ and $R>0$, we denote by $B(a,R)$ the ball centered at $a$ with radius $R$. The constants in the relations $\approx$ and $\lesssim$ are allowed to depend on $p_1,p_2,q$ and $n$.

We take $\delta>0$ a  very small constant. We put $f_1=\chi_{B(-e_1,\delta)}$ and $f_2=\chi_{B(0,\delta)},$ where $e_1=(1,0,\dots,0)$ is the unit vector in $\mathbb R^n$. We denote $$A=\left\{\frac{(x+e_1,x)}{|(x+e_1,x)|}: ~x\in B\bigg(0,\frac{1}{100}\bigg)\subset\mathbb R^n\right\}$$ and 
$$B=\{\gamma\in \mathbb S^{2n-1}: {\rm dist}(\gamma,A)\leq 10\delta\}.$$ We put 
$\Omega= K(\chi_{B}-\chi_{-B}).$ We choose the constant $K$ such that $\|\Omega\|_{L^q(\mathbb S^{2n-1})}=1.$ It is easy to see that the $(2n-1)$-dimensional measure of the set $B$ in $\mathbb S^{2n-1}$ is $C\delta^{n-1},$ where $C$ depends on $n$, as the set $B$ is a slightly deformed tube with $n$ dimensions of length roughly $1/100$ and $n-1$ dimensions of roughly $10\delta.$ Thus, $K\approx\delta^{(1-n)/q}.$ Also, $\Omega$ is clearly odd and supported in the set $\{(\theta_1,\theta_2)\in \mathbb S^{2n-1}: ~|\theta_1-\theta_2|>1/2\}.$

Now, we see that $\|f_1\|_{L^{p_1}(\R^n)}\approx \delta^{n/p_1}$ and $\|f_2\|_{L^{p_2}(\R^n)}\approx \delta^{n/p_2}.$ Therefore 
$$
\|f_1\|_{L^{p_1}(\R^n)}\|f_2\|_{L^{p_2}(\R^n)}\approx \delta^{n/p}.
$$
Assume that $x\in B(0,\frac{1}{100})$ and $y_1$, $y_2$ are such that $f_1(x-y_1) \neq 0$ and $f_2(x-y_2)\neq 0$. Then $|(y_1,y_2)|$ is roughly $1$ and
\[
\frac{(y_1,y_2)}{|(y_1,y_2)|} \in B,
\]
i.e., the supports of $f_1$ and $f_2$ interact only with the positive part of the kernel in the integral~\eqref{eq:T-multilinear} defining the operator $T_\Omega$.
This means that we get $T_{\Omega}(f_1,f_2)(x)\approx K\|f_1\|_{L^1(\R^n)}\|f_2\|_{L^1(\R^n)}\approx K\delta^{2n}.$ Therefore 
$$\|T_{\Omega}(f_1,f_2)\|_{L^p(\R^n)}\gtrsim \left( \int_{B(0,\frac{1}{100})} K^p \delta^{2np} \right)^{1/p}\approx K \delta^{2n}.$$

Collecting these estimates we get
\begin{equation}\label{cexn}
 \|T_{\Omega}(f_1,f_2)\|_{L^p(\R^n)} \gtrsim \delta^{(1-n)/q}\delta^{2n} \delta^{-n/p} \|f_1\|_{L^{p_1}(\R^n)}\|f_2\|_{L^{p_2}(\R^n)}.
\end{equation}
We see that under the assumption~\eqref{prangebili}, $\frac{1-n}{q}+2n-\frac{n}{p}$ is negative and therefore the norm of $T_\Omega$ goes to infinity as $\delta$ goes to $0$.

The summation argument to produce an unbounded operator is in this case very simple. Namely, it is enough to consider the sequence $\delta_k=2^{-k},$ where $k\geq k_0$ for a sufficiently large integer $k_0$, then produce the related functions $\Omega_k$ and put $\Omega=\sum_{k=k_0}^\infty \frac{1}{k^2}\Omega_k$. As $q\geq 1,$ we have that the $L^q$-norm of $\Omega$ is finite. The sets where the functions $\Omega_k$ are positive are nested, therefore the estimate~\eqref{cexn} is preserved with $\delta$ replaced by any $\delta_k$, using the test functions $f_1$ and $f_2$ adapted to $\delta_k.$ Thus, the operator $T_\Omega$ is not bounded from $L^{p_1}(\R^n) \times L^{p_2}(\R^n)$ into $L^p(\R^n)$.

\textup{(ii)}
In order to prove the second part of the theorem, we fix $m\geq 3,$ $q>1$ and $p_1,\dots,p_m,p$ with $\frac 1 p = \sum_{i=1}^m \frac 1 {p_i}.$ We assume that~\eqref{prangemultili} holds. Throughout the remaining part of the proof, the constants in the relations $\approx$ and $\lesssim$ are allowed to depend on $m$, $q$ and $p_1,\dots,p_m$. 

Let 
\[
c_m=\frac{\sqrt{6}}{\sqrt{(m^2+m)(2m+1)}},
\]
then the point $(c_m,2c_m,\dots,mc_m)$ belongs to $\S^{m-1}$. 
We take $\delta>0$ a  very small constant. We put 
$$
f_1=\chi_{(-c_m-\delta,-c_m+\delta)}, ~f_2=\chi_{(-2c_m-\delta,-2c_m+\delta)}, ~\cdots~, f_m=\chi_{(-mc_m-\delta,-mc_m+\delta)}.
$$ 
We denote $$A=\left\{\frac{(x+c_m,x+2c_m,\dots,x+mc_m)}{|(x+c_m,x+2c_m,\dots,x+mc_m)|}\in\mathbb S^{m-1}:~ x\in \bigg(0,\frac{1}{100 \sqrt{m}}\bigg)\right\}$$ and 
$$B=\{\gamma\in \mathbb S^{m-1}:~ {\rm dist}(\gamma,A)\leq 10 \sqrt{m}\delta\}.$$ We put 
$\Omega= K(\chi_{B}-\chi_{-B}).$ We choose the constant $K$ such that $\|\Omega\|_{L^q(\S^{m-1})}=1.$ It is easy to see that the $(m-1)$-dimensional measure of the set $B$ in $\mathbb S^{m-1}$ is $C\delta^{m-2},$ where $C$ depends on $m$, as the set $B$ is a slightly deformed tube with one dimension of length roughly $1$ and $m-2$ dimensions of roughly $\delta.$ Thus, $K\approx\delta^{(2-m)/q}.$ Additionally, we readily verify that $\Omega$ is odd and supported in the set 
$$
\{(\theta_1,\theta_2,\dots,\theta_m)\in \mathbb S^{m-1}: |\theta_i-\theta_j|>m^{-3/2} \text{ for } i \neq j\}.
$$ 

Now, we see that for $i=1,\dots,m$ we have  $\|f\|_{L^{p_i}(\R)}\approx \delta^{1/p_i}.$ Therefore, $$
\|f_1\|_{L^{p_1}(\R)}\cdots\|f_m\|_{L^{p_m}(\R)}\approx \delta^{1/p}.
$$ 
Assume that $x\in (0,\frac{1}{100 \sqrt{m}})$ and $y_i$ is such that $f_i(x-y_i) \neq 0$ for $i=1,\dots,m$. Then 
\[
\frac{(y_1,\dots,y_m)}{|(y_1,\dots,y_m)|} \in B,
\]
and therefore
$T_{\Omega}(f_1,\dots,f_m)(x)\approx K\|f_1\|_{L^1(\R)}\cdots\|f_m\|_{L^1(\R)}\approx K\delta^{m}.$ Thus,
$$\|T_{\Omega}(f_1,\dots,f_m)\|_{L^p(\R)}\gtrsim \left( \int_{0}^{\frac{1}{100\sqrt{m}}}K^p \delta^{mp} \right)^{1/p}\approx K \delta^{m}.$$

Collecting these estimates we get
\begin{equation*}
 \|T_{\Omega}(f_1,\dots,f_m)\|_{L^p(\R)} \gtrsim \delta^{(2-m)/q}\delta^{m} \delta^{-1/p} \|f_1\|_{L^{p_1}(\R)}\cdots\|f_2\|_{L^{p_m}(\R)}.
\end{equation*}
We see that under the assumption~\eqref{prangemultili}, $\frac{2-m}{q}+m-\frac{1}{p}$ is negative and therefore the norm of $T_\Omega$ goes to infinity as $\delta$ goes to $0$. The summation argument is then similar to that performed in part \textup{(i)}.
\end{proof}

\Addresses

\end{document}